\newtheorem{thm}{Theorem}[section]
\newtheorem{lem}[thm]{Lemma}
\newtheorem{prop}[thm]{Proposition}
\theoremstyle{definition}
\theoremstyle{remark}
\newtheorem{rem}[thm]{Remark}
\numberwithin{equation}{section}
\numberwithin{thm}{section}
\newcommand{\R}{{\mathbb{R}}}
\newcommand{\sgn}{\text{sgn}}
\newcommand{\re}{\text{Re}}
\newcommand{\ed}{\end {document}}
\newcounter{smalllist}
\title[Nonlocal $\alpha$-patch model]{On a one-dimensional $\alpha$-patch model with nonlocal drift and
fractional dissipation}
\author[H. Dong]{Hongjie Dong}
\address[H. Dong]{Division of Applied Mathematics, Brown University,
182 George Street, Providence, RI 02912, USA}
\email{Hongjie\_Dong@brown.edu}
\thanks{H. Dong was partially supported by the NSF under agreements DMS-0800129 and DMS-1056737.}
\author[D. Li]{Dong Li}
\address{Department of Mathematics, University of British Columbia, Vancouver BC Canada V6T 1Z2}%
\email{mpdongli@gmail.com}
\begin{document}

\begin{abstract}
We consider a one-dimensional
nonlocal nonlinear equation of the form: $\partial_t u = (\Lambda^{-\alpha} u)\partial_x u - \nu \Lambda^{\beta}u$
where $\Lambda =(-\partial_{xx})^{\frac 12}$ is the fractional Laplacian and $\nu\ge 0$ is the viscosity
coefficient. We consider primarily the regime $0<\alpha<1$ and $0\le \beta \le 2$ for which the model has
nonlocal drift, fractional dissipation, and captures essential features of the 2D $\alpha$-patch models.
In the critical and subcritical
range $1-\alpha\le \beta \le 2$, we prove global wellposedness for arbitrarily large initial data in
Sobolev spaces. In the full supercritical
range $0 \le \beta<1-\alpha$, we prove formation of singularities in finite time for a class of
smooth initial data. Our proof is based on a novel nonlocal weighted inequality which can be of independent interest.

\end{abstract}
\maketitle
\section{Introduction}

We consider the following nonlocal nonlinear transport equation of
Burgers' type:
\begin{align} \label{e_main}
\partial_t u =(\Lambda^{-\alpha} u) \partial_x u  -\nu \Lambda^{\beta} u,
\end{align}
where $\Lambda =(-\partial_{xx})^{\frac 12}$ is the fractional Laplacian,
$\nu\ge 0$ is the viscosity coefficient, $0<\alpha<1$, and  $0\le \beta\le 2$. In the inviscid case,
this equation can be viewed as a 1D analogy of the 2D $\alpha$-patch problem
$$
\partial_t\theta+u\cdot \nabla\theta=0,\quad u=(-\partial_{x_2},\partial_{x_1})(-\Delta)^{-(1+\alpha)/2}\theta,
$$
which represents an interpolation between the 2D Euler and quasi-geostrophic equations;
see \cite{CFMR05, G08, CCW11} for some discussions.

Equation \eqref{e_main} becomes more singular as $\alpha$ decreases.
It reduces to the inviscid Burger's equation when $\alpha=\nu=0$, in
which case it is well known that solutions may develop gradient
blowup in finite time. When $\alpha=0$ and $\beta\in (0,2]$,
\eqref{e_main} becomes the so-called fractal Burgers' equation,
which is perhaps one of the simplest nonlinear equations with
nonlocal terms. The fractal Burgers' equation was studied in detail
recently; see, for instance, \cite{BFW, ADV, KNS, DDL, MW09, CC10}.
It is known that in the super-critical dissipative case $\beta\in
(0,1)$, with very generic initial data the equation is locally
well-posed and its solution may develops gradient blowup in finite
time. In the critical and sub-critical dissipative case $\beta\in
[1,2]$, the equation is globally well-posed with arbitrary initial
data in suitable Sobolev spaces. Another borderline situation is
when $\alpha=1$ and $\nu=0$, in which case the equation is globally
well-posed; see \cite{OSW08} for a proof in the periodic boundary
condition case. As a matter of fact, in this case in the same spirit
as the Beale--Kato--Majda criterion the solution is regular up to
time $T$ as long as
$$
\int_0^T \|u(t,\cdot)\|_{L^\infty}\,dt<\infty.
$$
This holds for any $T$ since \eqref{e_main} is a transport equation
and $\|u(t,\cdot)\|_{L^\infty}$ is non-increasing.

In this paper, we consider primarily the regime $0<\alpha<1$ and
$0\le \beta \le 2$ for which the model has nonlocal drift,
fractional dissipation, and captures essential features of the 2D
$\alpha$-patch models. We are interested in proving either the
global regularity or finite-time blowup for \eqref{e_main}. In the
general case, the corresponding regularity criterion turns out to be
$$
\int_0^T \|\partial_x\Lambda^{-\alpha}u(t,\cdot)\|_{\infty}\,dt<\infty\quad
\text{or}\quad
\int_0^T \|\Lambda^{1-\alpha}u(t,\cdot)\|_{\infty}\,dt<\infty,
$$
which are not straightforward to verify. See Theorem \ref{thm_M1}
and Remark \ref{rm2.5}.

Let us describe the main results of the paper. In the critical and
sub-critical range $1-\alpha\le \beta \le 2$, we obtain global
wellposedness for arbitrarily large initial data in suitable Sobolev
spaces. In the full supercritical range $0 \le \beta<1-\alpha$, we
prove formation of singularities in finite time for a family of
smooth initial data. These results are in the same spirit of the
results in \cite{KNS, DDL} for the fractal Burgers' equation.

We state our main results of the paper more precisely in the following two theorems.


\begin{thm}[Finite time blowup in the supercritical case] \label{thm2}
For any $0<\alpha<1$, $0\le \beta<1-\alpha$, $2\alpha<\delta<2(1-\beta)$ and $\nu \ge 0$,
 there is a constant $C_{\alpha,\beta,\delta, \nu}>0$
sufficiently large such that the following hold true: \\
For any smooth initial data $u_0 \in C_c^\infty(\mathbb R)$ which is odd in $x$ and satisfies:
\begin{itemize}
\item[a)] $u_0(x) \ge 0$, for any $x\ge 0$;
\item[b)]
\begin{align}
\int_0^{\infty} \frac { u_0(x)} {x^{\delta-\alpha}} \,dx \ge C_{\alpha,\beta,\delta,\nu}
(1+ \|u_0\|_{L_x^1} ), \label{e228_e5}
\end{align}
\end{itemize}
the corresponding solution to \eqref{e_main} blows up in finite time.
\end{thm}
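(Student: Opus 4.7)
The plan is a Kiselev--Nazarov--Shterenberg-style Riccati argument adapted to the nonlocal drift. First, by uniqueness and the invariance of \eqref{e_main} under $(x,u)\mapsto(-x,-u)$, oddness of $u_0$ is preserved along the flow, so $u(t,0)=0=(\Lambda^{-\alpha}u)(t,0)$. Under hypothesis (a), the Riesz-potential representation
\[
\Lambda^{-\alpha}u(x)=c_\alpha\int_0^\infty u(y)\bigl[|x-y|^{\alpha-1}-(x+y)^{\alpha-1}\bigr]\,dy\ge 0,\qquad x\ge 0,
\]
gives the correct sign of the advection field on the positive half-line whenever $u(t,\cdot)\ge 0$ on $[0,\infty)$. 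Propagation of this pointwise inequality is a delicate auxiliary point: one either establishes it by a maximum-principle argument tailored to odd solutions (using the vanishing of both $u$ and $\Lambda^{-\alpha}u$ at $x=0$), or bypasses it by arguing that the blow-up will occur in a time too short for the sign to be destroyed.

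Choose a smooth cutoff $\phi\ge 0$ with $\phi\equiv 1$ on $[0,1/2]$ and $\phi\equiv 0$ on $[1,\infty)$, and monitor the weighted moment
\[
I(t):=\int_0^\infty \frac{u(t,x)\,\phi(x)}{x^{\delta-\alpha}}\,dx.
\]
The range $\delta<2$ (ensured by $\beta\ge 0$ and $\delta<2(1-\beta)$) makes this integral finite for smooth odd $u$, and hypothesis \eqref{e228_e5}, combined with the elementary control $\int_{1/2}^\infty|u(t,x)|x^{-(\delta-\alpha)}\,dx\lsm \|u_0\|_{L^1}$, gives $I(0)\gtrsim C_{\alpha,\beta,\delta,\nu}(1+\|u_0\|_{L^1})$. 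Differentiating along \eqref{e_main},
\[
I'(t)=\int_0^\infty \frac{\phi}{x^{\delta-\alpha}}(\Lambda^{-\alpha}u)u_x\,dx-\nu\int_0^\infty \frac{\phi}{x^{\delta-\alpha}}\Lambda^\beta u\,dx=:D(t)-\nu R(t).
\]
One integration by parts in $D(t)$ (boundary terms vanishing by oddness together with $\delta-\alpha<2$) produces the positive principal term $(\delta-\alpha)\int_0^\infty \phi(\Lambda^{-\alpha}u)u/x^{\delta-\alpha+1}\,dx$ plus an error supported where $\phi'\ne 0$ and a non-sign-definite piece involving $\partial_x\Lambda^{-\alpha}u$. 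The dissipation $\nu R(t)$ is estimated by extending $\phi/x^{\delta-\alpha}$ oddly to $\R$, transferring $\Lambda^\beta$ onto the weight, and using the pointwise bound $|\Lambda^\beta[\sgn(x)\phi(|x|)/|x|^{\delta-\alpha}](x)|\lsm 1/(|x|^{\delta-\alpha+\beta}+1)$; the range $\delta<2(1-\beta)$ is precisely what ensures this weight is integrable against the odd profile of $u$, yielding $|\nu R(t)|\le C_\nu(1+\|u_0\|_{L^1})$ uniformly in $t$ (with $\|u(t)\|_{L^1}$ controlled by a standard conservation argument exploiting oddness and decay at infinity).

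The technical heart of the proof is the novel nonlocal weighted inequality, which extracts from $D(t)$ a quadratic-in-$I(t)$ lower bound of the form
\[
D(t)\ge \frac{c_{\alpha,\delta}\,I(t)^2}{1+\|u_0\|_{L^1}}-C_{\alpha,\delta}(1+\|u_0\|_{L^1}).
\]
The idea is to substitute the Riesz kernel so that the positive principal piece of $D$ becomes a double integral $\iint_{(0,\infty)^2}u(x)u(y)K(x,y)\,dx\,dy$ with symmetric kernel $K$; after symmetrization in $x\leftrightarrow y$ and use of the elementary lower bound $|x-y|^{\alpha-1}-(x+y)^{\alpha-1}\gtrsim xy\cdot\max(x,y)^{\alpha-3}$, one extracts a rank-one piece proportional to $\chi_{[0,1/2]}(x)\chi_{[0,1/2]}(y)/(xy)^{\delta-\alpha}$ that, paired with $u\otimes u$, produces exactly $I(t)^2$; the error from $\phi'$ and from the non-sign-definite $\partial_x\Lambda^{-\alpha}u$ contribution is absorbed via Cauchy--Schwarz at the cost of at most one factor of $\|u\|_{L^1}$. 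The constraint $2\alpha<\delta$ is precisely what renders the principal kernel integrable near the diagonal and makes the rank-one lower bound effective.

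Assembling these estimates yields a Riccati differential inequality $I'(t)\ge cI(t)^2/M-CM$ with $M:=1+\|u_0\|_{L^1}$. For $C_{\alpha,\beta,\delta,\nu}$ sufficiently large, $I(0)\ge C_{\alpha,\beta,\delta,\nu}\,M$ ensures $cI(0)^2/M\gg CM$, so the inequality forces $I(t)\to\infty$ in finite time, contradicting global smoothness through the regularity criterion discussed after Theorem~\ref{thm_M1}. The chief obstacle is the key inequality itself: one must simultaneously neutralize the non-sign-definite $\partial_x\Lambda^{-\alpha}u$ contribution by symmetrization and produce the quadratic lower bound with only linear dependence on $\|u\|_{L^1}$, since a higher power of $M$ in the Riccati threshold would require $I(0)$ larger than what the linear hypothesis \eqref{e228_e5} provides.
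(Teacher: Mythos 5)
Your overall architecture (monitor a weighted moment, derive a Riccati inequality, contradict the continuation criterion) does match the paper, and your $I(t)$ is, up to the cutoff, the very quantity the paper tracks, since by Lemma \ref{lem1} one has $\int_0^\infty \Lambda^{-\alpha}u\,x^{-\delta}\,dx = C_{\alpha,\delta}\int_0^\infty u\,x^{\alpha-\delta}\,dx$. But the step you yourself call the technical heart is exactly where your mechanism fails, and it fails for a reason the paper flags explicitly. After integrating by parts and symmetrizing, you keep only the ``principal'' kernel coming from differentiating the weight and dismiss the term $-\int_0^\infty u\,w\,\partial_x\Lambda^{-\alpha}u\,dx$ as an error absorbable ``via Cauchy--Schwarz at the cost of at most one factor of $\|u\|_{L^1}$.'' That is dimensionally impossible: this term is quadratic in $u$ and carries one derivative, so it scales exactly like your principal term, and no bound linear in $\|u\|_{L^1}$ with a uniform constant can hold. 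Concretely, writing $G(x,y)=|x-y|^{\alpha-1}-(x+y)^{\alpha-1}$ and symmetrizing in $x\leftrightarrow y$, this term produces the genuinely negative, diagonal-singular contribution $-\tfrac{1-\alpha}{2}\iint u(x)u(y)\,|x-y|^{\alpha-2}\,|w(x)-w(y)|\,dx\,dy$ (plus a harmless positive piece), whose near-diagonal size $\sim |w'(x)|\,|x-y|^{\alpha-1}u(x)u(y)$ is of the \emph{same order} as the near-diagonal part of your principal term. Which of the two wins cannot be decided by pointwise kernel bounds or a rank-one extraction; indeed your claimed conclusion $D(t)\gtrsim I(t)^2/M-CM$ is, in substance, the inequality \eqref{conj_ineq}, which the remark following Proposition \ref{prop1} states is probably false without extra negative terms on the right. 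The paper's key idea, absent from your proposal, is to lower-bound the bilinear term by a quadratic expression in the \emph{nonlocal} quantity, $\int_0^\infty (\Lambda^{-\alpha}u)^2x^{-(1+\delta)}\,dx$ (Proposition \ref{prop1}), and the positivity there is established spectrally: a Mellin--Parseval diagonalization reduces it to $\re\bigl(F_{\alpha,\theta}(\lambda)\bigr)>0$, a nontrivial fact requiring the Gamma-function series and Stirling asymptotics of Lemma \ref{lem3}, not pointwise kernel positivity.

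Two secondary gaps. First, your uniform bound $|\nu R(t)|\le C_\nu(1+\|u_0\|_{L^1})$ is unjustified as stated: near $x=0$ the transferred weight behaves like $x^{-(\delta-\alpha+\beta)}$ and cannot be paired against $\|u\|_{L^1}$ alone. The paper instead splits at $x=1$, bounds the tail by $\|u(t)\|_{L^1}\le\|u_0\|_{L^1}$ using Lemma \ref{lem1a} together with Lemma \ref{lem100} (the latter proved by a time-splitting argument, not a soft conservation remark), and absorbs the near-origin part into the positive quadratic term by Cauchy--Schwarz; it is there, via $\int_0^1 x^{1-\delta-2\beta}\,dx<\infty$, that the hypothesis $\delta<2(1-\beta)$ enters. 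Second, your proposed fallback for positivity propagation (``blow-up before the sign is destroyed'') is not a workable substitute for an actual proof; the paper establishes sign preservation within Lemma \ref{lem100}, where it is needed for the $L^1$ monotonicity that closes the Riccati argument.
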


\begin{thm}[Global wellposedness in the critical and supercritical cases] \label{thm3}
Let $0<\alpha<1$, $1-\alpha\le \beta\le 2$, and $\nu > 0$. Assume
$1<p< \frac 1 {\alpha}$ and set $k_0= \max\{ \frac{\frac 1p-\frac 12}{\frac1p-\alpha}, 2\}$. Suppose that
the initial data $u_0 \in L^{p} \cap H^k$ for some integer $k> k_0$.
Then there exists a unique solution $u$ to \eqref{e_main}
in the space $ C([0,\infty), L^p \cap H^k)$ with $u(0)=u_0$.
Moreover, $u\in C((0,T), L^p \cap H^{k^\prime})$ for any $k^\prime \ge k$.
\end{thm}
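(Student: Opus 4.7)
The strategy has three stages: a local theory with BKM-type continuation criterion, a global $L^p$ a priori bound coming from the novel nonlocal weighted inequality announced in the abstract, and a Sobolev bootstrap powered by the dissipation. First, Theorem~\ref{thm_M1} together with Remark~\ref{rm2.5} furnishes a unique maximal solution $u \in C([0,T_\ast); L^p \cap H^k)$ and reduces global wellposedness to showing that $\int_0^T \|\partial_x \Lambda^{-\alpha} u\|_{L^\infty}\,dt$ (equivalently, $\int_0^T \|\Lambda^{1-\alpha} u\|_{L^\infty}\,dt$) is finite on every compact time interval $[0,T]\subset[0,T_\ast)$.

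Second, I would pair \eqref{e_main} with $|u|^{p-2} u$ to get
\begin{align*}
\tfrac{1}{p}\tfrac{d}{dt}\|u\|_{L^p}^p + \nu \int (\Lambda^\beta u)|u|^{p-2}u\,dx = -\tfrac{1}{p}\int\bigl(\partial_x \Lambda^{-\alpha}u\bigr)|u|^p\,dx.
\end{align*}
A C\'ordoba--C\'ordoba pointwise inequality (and its $L^p$ generalization) provides a positive nonlinear lower bound for the dissipation on the left, while the drift term on the right is handled by the new nonlocal weighted inequality: it trades the Riesz potential $\partial_x \Lambda^{-\alpha}$ against a weighted fractional dissipation of $|u|^{p/2}$ of order $\beta/2$, with a sharp constant permitting absorption into the dissipation side exactly when $1-\alpha \le \beta$ and $1 < p < 1/\alpha$. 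This yields the a priori bound $\|u(t)\|_{L^p} \le \|u_0\|_{L^p}$ for all $t\in[0,T_\ast)$.

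Third, I would propagate $H^k$ regularity. One-dimensional Gagliardo--Nirenberg interpolation between $L^p$ and $H^k$ gives, for every integer $k > k_0$,
\begin{align*}
\|\Lambda^{1-\alpha} u\|_{L^\infty} \lesssim \|u\|_{L^p}^{1-\theta}\,\|u\|_{H^k}^{\theta}
\end{align*}
with some $\theta=\theta(\alpha,p,k)\in(0,1)$; the specific threshold $k_0$ in the statement is precisely the minimal integer for which such a $\theta<1$ is available in the low-integrability range $p<2$ (accounting for the scaling deficit of $L^p$ relative to $L^2$). Feeding this into the standard $H^k$ energy estimate
\begin{align*}
\tfrac{1}{2}\tfrac{d}{dt}\|u\|_{H^k}^2 + c\nu\,\|\Lambda^{k+\beta/2} u\|_{L^2}^2 \lesssim \|\partial_x \Lambda^{-\alpha} u\|_{L^\infty}\,\|u\|_{H^k}^2 + (\text{commutators}),
\end{align*}
with the commutators $[\Lambda^k,\Lambda^{-\alpha}u]\partial_x u$ controlled by Kato--Ponce, and using Young's inequality to absorb the high-regularity factor into the dissipation term on the left, one closes a Gr\"onwall loop that bounds $\|u(t)\|_{H^k}$ on any finite interval. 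Hence $T_\ast=\infty$. Uniqueness is obtained by an $L^2$ energy-difference estimate using the same $L^p$ bound on the drift, and the instantaneous smoothing $u\in C((0,T);H^{k'})$ for every $k'\ge k$ follows by a standard bootstrap, upgrading one integer level of Sobolev regularity at a time via the smoothing of $\Lambda^\beta$.

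The principal obstacle is the second step. The classical C\'ordoba--C\'ordoba argument only exploits the dissipation pointwise, whereas here the drift is a genuinely nonlocal Riesz potential of $u$ for which rearrangement is unavailable. Identifying the correct nonlocal weighted inequality with a constant sharp enough that absorption succeeds down to the critical threshold $\beta=1-\alpha$ and up to the endpoint $p\to 1/\alpha$ is the genuinely new technical ingredient; once it is in hand, the remaining local-theory and bootstrap steps are essentially standard.
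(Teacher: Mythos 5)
There is a genuine gap, and it sits exactly where you locate the ``principal obstacle.'' First, you have misattributed the role of the paper's nonlocal weighted inequality: Proposition \ref{prop1} (inequality \eqref{e25_40a}) is a \emph{blowup} tool. It concerns odd functions, power weights $x^{-\delta}$ on the half-line, and produces a positive \emph{lower} bound for $\int_0^\infty x^{-\delta}\Lambda^{-\alpha}(\Lambda^{-\alpha}u\,\partial_x u)\,dx$, which drives the ODE argument for Theorem \ref{thm2} in the supercritical range $\beta<1-\alpha$. It is not, and cannot be turned into, an absorption mechanism trading $\partial_x\Lambda^{-\alpha}u$ against weighted dissipation of $|u|^{p/2}$ in an $L^p$ energy identity; no such inequality appears in the paper. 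Relatedly, your claimed a priori bound $\|u(t)\|_{L^p}\le\|u_0\|_{L^p}$ is unjustified: the drift $\Lambda^{-\alpha}u$ is not divergence-free, the term $\int(\partial_x\Lambda^{-\alpha}u)|u|^p\,dx$ has no sign, and even in the most dissipative case $\beta=2$ the paper only obtains the exponentially growing bound $\|u(t)\|_p\le Ce^{Ct}$ (via a Littlewood--Paley splitting of the drift into $P_{<1}$ and $P_{\ge 1}$ pieces).

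Second, even granting an $L^p$ bound, your Gr\"onwall scheme in the third step fails at the critical threshold $\beta=1-\alpha$, which is the heart of the theorem. Under the natural scaling $u_\lambda(t,x)=\lambda^{\alpha+\beta-1}u(\lambda^\beta t,\lambda x)$ with $\beta=1-\alpha$, one has $\|u_\lambda\|_{L^p}=\lambda^{-1/p}\|u\|_{L^p}$, so $L^p$ is a \emph{supercritical} quantity, while $\|\Lambda^{1-\alpha}u\|_\infty$ scales exactly like the dissipation. Interpolating $\|\Lambda^{1-\alpha}u\|_\infty\lesssim\|u\|_{L^p}^{1-\theta}\|u\|_{H^k}^\theta$ therefore feeds a superlinear term $\|u\|_{H^k}^{2+\theta}$ into the $H^k$ estimate with no small parameter, and the dissipation $\nu\|\Lambda^{k+\beta/2}u\|_2^2$ cannot absorb it: scaling-wise, absorption of the worst commutator term $\|\Lambda^{k+1-\alpha}u\|_2$ would require $\beta\ge 2(1-\alpha)$, far from the critical line. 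This obstruction is precisely why the paper does something entirely different for $1-\alpha\le\beta<2$: it adapts the Kiselev--Nazarov--Volberg nonlocal maximum principle, constructing explicit concave moduli of continuity (with $\omega''(r)=-\delta/(r^\alpha+r^2)$ in the critical case and $-\delta/(r^\alpha+r^5)$ in the subcritical case), showing via Lemmas \ref{lem3.4} and \ref{lem3.5} that they are preserved in time --- the key point being that Lemma \ref{lem3.4} converts the modulus $\omega$ of $u$ into a modulus $\Omega$ of the drift $\Lambda^{-\alpha}u$, so that the breakthrough scenario forces $I_3+I_4+I_5<0$ --- which yields the uniform bound $\|\partial_x u(t)\|_\infty\le\omega'(0)$ and closes via the continuation criterion \eqref{Mar6_e9}. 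The case $\beta=2$ then requires yet another argument (the representation \eqref{eq1.23} fails there), handled by the $L^p$/parabolic-regularity estimate sketched above. Your local-theory and smoothing steps are fine, but the global mechanism you propose does not reach the critical case and rests on an inequality the paper does not contain.
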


For the proof of Theorem \ref{thm2},  we consider the evolution of a
weighted integral  of $\Lambda^{-\alpha}u$. The main difficulty is
to show that such quantity satisfies some ordinary differential
inequality, and would blow up in finite time under conditions a) and
b) above. This then implies the blowup of the solution. Before, this
type of approaches can be found, for instance, in \cite{CCF05,
CCF06, dongli2, LR08, LR09, CCG10}. We point out that, in these
papers, weighted integral of solutions themselves are investigated,
which unfortunately does not seem to work in our case. The key new
observation in our case is that instead of working with the solution
$u$ itself, it is more natural to consider certain weighted integral
of the nonlocal quantity $\Lambda^{-\alpha} u$ and establish
positive lower bounds expressed in terms of $\Lambda^{-\alpha} u$.
More precisely we have the following nonlocal weighted inequality,
which could be of independent interest.

\begin{prop}[A nonlocal weighted inequality] \label{prop1}
Let $0<\alpha<1$ and $2\alpha<\delta<2$. There is a constant
$C_{\alpha,\delta}>0$ such that for any odd function $u \in C^1_b(\mathbb R) \cap L^1_x(\mathbb R)$, we have
\begin{align}
\int_0^\infty \frac{\Lambda^{-\alpha} ( \Lambda^{-\alpha} u \cdot \partial_x u ) } {x^{\delta}}
\,dx \ge C_{\alpha,\delta} \int_0^\infty \frac{(\Lambda^{-\alpha} u )^2} {x^{1+\delta}} \,dx. \label{e25_40a}
\end{align}
Here $C_b^1(\mathbb R)=\{ f \in C^1(\mathbb R): \|f\|_{L^\infty_x} + \| f^\prime\|_{L^\infty_x} <\infty\}$.
\end{prop}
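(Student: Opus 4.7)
The strategy is to reduce the inequality by Fubini to a one-dimensional weighted quadratic form in $v = \Lambda^{-\alpha} u$, and then to establish a Hardy-type lower bound. Since $u$ is odd, so is $v$, hence $v u_x$ is odd; writing the Riesz-potential kernel and splitting into $y \gtrless 0$ gives, for $x > 0$,
\begin{equation*}
\Lambda^{-\alpha}(v u_x)(x) = c_\alpha \int_0^\infty v(y) u_y(y) \bigl[|x-y|^{\alpha-1} - (x+y)^{\alpha-1}\bigr] dy.
\end{equation*}
Inserting this into the left-hand side of \eqref{e25_40a} and exchanging the order of integration (justified because $|x-y|^{\alpha-1} - (x+y)^{\alpha-1} = 2(1-\alpha) x y^{\alpha-2} + O(x^2)$ as $x \to 0^+$, so the joint integrand is $O(x^{1-\delta})$), the inner $x$-integral computed via $x = yt$ yields $N_0\, y^{\alpha-\delta}$ with
\begin{equation*}
N_0 = \int_0^\infty \frac{|1-t|^{\alpha-1} - (1+t)^{\alpha-1}}{t^\delta} dt.
\end{equation*}
The integrand is pointwise positive ($|1-t| \le 1+t$ and $\alpha-1 < 0$), and the integral converges for $2\alpha < \delta < 2$: $O(t^{1-\delta})$ at $t=0$, integrable singularity at $t=1$ because $\alpha > 0$, and decay $O(t^{\alpha-2-\delta})$ at $t=\infty$. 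Thus $N_0 > 0$ and the left-hand side of \eqref{e25_40a} equals $c_\alpha N_0 \int_0^\infty v u_y\, y^{\alpha-\delta} dy$, where $u = \Lambda^\alpha v$.

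It therefore suffices to prove the reduced inequality
\begin{equation*}
\int_0^\infty v(y) u_y(y) y^{\alpha-\delta} dy \ge C \int_0^\infty v(y)^2 y^{-1-\delta} dy.
\end{equation*}
An integration by parts---boundary terms vanish because $v(0) = u(0) = 0$ by oddness and both functions decay at infinity---gives the decomposition
\begin{equation*}
\int_0^\infty v u_y y^{\alpha-\delta} dy = \frac{\delta-\alpha}{2}\int_0^\infty v\, \Lambda^\alpha v \cdot y^{\alpha-\delta-1} dy + \frac{1}{2}\int_0^\infty (v\Lambda^\alpha v_y - v_y \Lambda^\alpha v)\, y^{\alpha-\delta} dy.
\end{equation*}
The first, ``local'' term is to be bounded below by a multiple of $\int v^2 y^{-1-\delta} dy$ using the pointwise C\'ordoba--C\'ordoba inequality $2v\,\Lambda^\alpha v \ge \Lambda^\alpha(v^2)$ combined with (distributional) self-adjointness of $\Lambda^\alpha$ against the positive weight $|y|^{\alpha-\delta-1}$. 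The commutator-type second term is handled by expanding $\Lambda^\alpha$ via its singular-integral representation on odd functions and symmetrizing in $y \leftrightarrow z$, extracting a contribution of controlled sign.

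The main obstacle is establishing the coercivity displayed above: because $\delta > 2\alpha$, the weight $y^{\alpha-\delta-1}$ has exponent strictly less than $-1$ and is too singular at the origin for naive duality, while the commutator integral has no pointwise sign---positivity arises only from careful cancellation between the two pieces. A clean implementation of the coercivity is through the Mellin transform: on the natural contour $\mathrm{Re}(s) = -\delta/2$ dictated by the weight $y^{-1-\delta}$, $\Lambda^\alpha$ acts as a shift multiplier and the inequality reduces to showing that the explicit multiplier $c'_\alpha\, M(\sigma)(\sigma-\alpha)$ at $\sigma = \delta/2 + i\tau$---where $M$ arises from $\Lambda^\alpha y^\sigma = c'_\alpha M(\sigma) y^{\sigma-\alpha}$ on odd power functions---has real part uniformly bounded below by a positive constant for all $\tau \in \mathbb{R}$; strict positivity at $\tau = 0$ uses $\delta/2 > \alpha$, while integrability of $M$ and asymptotic behavior as $|\tau|\to\infty$ use $\delta < 2$.
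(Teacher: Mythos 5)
Your overall route coincides with the paper's own proof. The Fubini computation with $N_0>0$ is exactly the paper's Lemma \ref{lem1} reduction of the left-hand side to a constant multiple of $\int_0^\infty \Lambda^{-\alpha}u\,\partial_x u\,x^{\alpha-\delta}\,dx$, and your ``clean implementation'' via the Mellin transform on the contour dictated by the weight, with combined multiplier $c'_\alpha M(\sigma)(\sigma-\alpha)$, is precisely the paper's passage to the Parseval identity for Mellin transforms, where the multiplier is $F_{\alpha,\theta}(\lambda)$ with $\theta=\delta/2-\alpha$ and the derivative factor is absorbed via $z\Gamma(z)=\Gamma(z+1)$ (Lemma \ref{lem3}). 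The intermediate integration-by-parts decomposition (C\'ordoba--C\'ordoba term plus commutator) is a detour you yourself concede does not close---the weight $y^{\alpha-\delta-1}$ is too singular for duality, consistent with the remark following Proposition \ref{prop1} that pointwise-sign arguments of this type fail here---so nothing in your argument ultimately rests on it, and it can be deleted.

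The genuine gap is at the crux: you assert, but do not prove, that $\re\bigl(c'_\alpha M(\sigma)(\sigma-\alpha)\bigr)$ is uniformly bounded below by a positive constant along the whole contour, and the justification you offer---strict positivity at $\tau=0$ plus asymptotics as $|\tau|\to\infty$---does not yield this: continuity excludes neither zeros nor sign changes of the real part at intermediate $\tau$, and precisely this intermediate-range positivity is the hard content of the paper's Lemma \ref{lem3}, proved there by a singular-integral/binomial-series representation of $F_{\alpha,\theta}$ showing termwise that $\re F_{\alpha,\theta}(\lambda)\ge \re F_{\alpha,\theta}(0)>0$ for \emph{every} $\lambda$. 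Moreover, even your large-$|\tau|$ claim is not routine: by Stirling's formula the leading term of $F_{\alpha,\theta}(\lambda)$ is purely imaginary, $F_{\alpha,\theta}(\lambda)\approx -i(\lambda/2)^{1+\alpha}$, so $\re F_{\alpha,\theta}(\lambda)\sim C_2\lambda^{\alpha}$ emerges only from a first-order correction whose sign must be computed; the paper finds the coefficient proportional to $\theta^2+4\theta(1+\alpha)+2\alpha(1+\alpha)$, whose positivity uses $\theta=\delta/2-\alpha> 0$, i.e. $\delta>2\alpha$---not $\delta<2$ as you suggest ($\delta<2$, i.e. $\theta+\alpha<1$, is what legitimizes the power-function formulas in the first place). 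Until you supply a proof of the uniform lower bound $\re F_{\alpha,\theta}(\lambda)\gtrsim 1$ (the paper in fact obtains the sharp two-sided bound $\asymp 1+|\lambda|^{\alpha}$), your write-up is a correct skeleton of the paper's proof with its central lemma missing.
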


\begin{rem}
The parity assumption in Proposition \ref{prop1} can probably be
removed or relaxed by a more elaborate analysis but we will not do
it here. The nontrivial point of Proposition \ref{prop1} is to guess
the correct positive lower bound such as the right-hand side of \eqref{e25_40a}.
We briefly explain the difficulty as follows. By a simple
computation (see e.g. Lemma \ref{lem1}), we have for some constant
$C_{\alpha,\delta}^\prime
>0$,
\begin{align*}
\text{LHS of \eqref{e25_40a}} = C^\prime_{\alpha,\delta}
\int_0^{\infty} \frac{ \Lambda^{-\alpha} u \cdot \partial_x u }{
x^{\delta-\alpha} }dx.
\end{align*}
From this and a scaling heuristic, one is led to conjecture the
inequality
\begin{align}
\int_0^{\infty} \frac{ \Lambda^{-\alpha} u \cdot \partial_x u }{
x^{\delta-\alpha} }dx \gtrsim \int_0^{\infty} \frac {u^2}
{x^{1+\delta-2\alpha}} dx. \label{conj_ineq}
\end{align}

The inequality \eqref{conj_ineq} is close in spirit to the type of
inequalities used in \cite{CCF05, CCF06, dongli2, LR08, LR09,
CCG10}. However a preliminary calculation shows that the inequality
\eqref{conj_ineq} is probably false unless some negative terms are
added on the right-hand side of \eqref{conj_ineq}. To circumvent this difficulty
we prove \eqref{e25_40a}. Note that these two lower bounds are
\emph{not} equivalent although they obey the same scaling relations.
\end{rem}

For the proof of Theorem \ref{thm3},  when $\beta<2$ we adapt an
idea of the  non-local maximum principle for a suitably chosen
modulus of continuity. This method was first used by Kiselev,
Nazarov, and Volberg in \cite{kiselev}, where they established the
global regularity for the 2D critical dissipative quasi-geostrophic
equations with periodic $C^\infty$ data. We also refer the reader to
\cite{Ki11} and the references therein for further applications and
development of this method. In the case $\beta=2$, this method does
not seem to be applicable, and we use a different approach.

The remaining part of the paper is organized as follows. In the next
section, we prove the local wellposedness, continuation criteria for
\eqref{e_main}, and Theorem \ref{thm3}. In section \ref{sec3} we
present several auxiliary lemmas, which will be used in the proofs of
Proposition \ref{prop1} and Theorem \ref{thm2} in the last section.

We close this introduction by setting up some

\subsection*{Notations} For any two quantities $A$ and $B$,  we use $A \lesssim B$ (resp. $A
\gtrsim B$ ) to denote the inequality $A \leq CB$ (resp. $A \geq
CB$) for a generic positive constant $C$. The dependence of $C$ on
other parameters or constants are usually clear from the context and
we will often suppress  this dependence.  The value of $C$ may
change from line to line.  For any function $f:\; \mathbb R\to
\mathbb R$, we use $\|f\|_{L^p}$ or sometimes $\|f\|_p$ to denote
the  usual Lebesgue $L^p$ norm of a function for $1 \le p \le
\infty$.

The fractional Laplacian operator $\Lambda^s$, $s\in \mathbb R$ is
defined via Fourier transform as
\begin{align*}
\mathcal F (  |\nabla|^s f ) (\xi) = |\xi|^s (\mathcal F f)(\xi),
\qquad \xi \in \mathbb R.
\end{align*}
The homogeneous Sobolev norm $\dot H^{s}$ for any $s \ge 0$ is
defined as $\| f \|_{\dot H^{s} } = \| |\nabla|^s f \|_{2}$ or more
explicitly:
\begin{align*}
\| f \|_{\dot H^s} = \Bigl( \int_{\mathbb R} |\xi|^{2s} |(\mathcal F
f)(\xi)|^2 d\xi \Bigr)^{\frac 12}.
\end{align*}

We will need to use the Littlewood--Paley (LP) frequency projection
operators. For simplicity we shall fix the notations on $\mathbb R$,
but it is straightforward to define everything in $\mathbb R^d$ for
any $d\ge 1$. To fix the notation, let $\phi \in
C_0^\infty(\mathbb{R})$ and satisfy
\begin{equation}\nonumber
0 \leq \phi \leq 1,\quad \phi(x) = 1\ {\text{ for}}\ |x| \leq
1,\quad \phi(x) = 0\ {\text{ for}}\ |x| \geq 2.
\end{equation}
For two real positive numbers $\alpha < \beta$, define the frequency
localized (LP) projection operator $P_{\alpha <\cdot<\beta}$ by
\begin{equation}\nonumber
P_{\alpha <\cdot<\beta}f = \mathcal{F}^{-1}\big([\phi(\beta^{-1}\xi)
- \phi(\alpha^{-1}\xi)]\mathcal{F}(f)\big).
\end{equation}
Here $\mathcal{F}$ and $\mathcal{F}^{-1}$ denote the Fourier
transform and its inverse transform, respectively. Similarly, the
operators $P_{< \alpha}$ and $P_{> \beta}$ are defined by
\begin{equation}\nonumber
P_{<\beta}f =
\mathcal{F}^{-1}\big(\phi(\beta^{-1}\xi)\mathcal{F}(f)\big),
\end{equation}
and
\begin{equation}\nonumber
P_{> \alpha}f = \mathcal{F}^{-1}\big([1 -
\phi(\alpha^{-1}\xi)]\mathcal{F}(f)\big).
\end{equation}

We recall the following Bernstein estimates:  for any $1\le p\le
q\le \infty$ and dyadic $N>0$,
\begin{align*}
\| P_{<N} f\|_{L^q (\mathbb R)} \lesssim N^{\frac 1p -\frac 1q} \|
f\|_{L^p(\mathbb R)}.
\end{align*}

\section{Local and global regularity}
We first state the following positivity lemma which is a simple variant
of Lemma 2.5 in \cite{CC04}. We include the proof here for the sake of completeness.
\begin{lem}[Positivity lemma]
Let $F: \mathbb R\to \mathbb R$ be a nondecreasing function. Assume $0\le \beta \le 2$. Then
for any $\theta:\mathbb R\to \mathbb R$ such that $\Lambda^\beta \theta$ is well-defined and
$F(\theta) \Lambda^{\beta} \theta \in L^1(\mathbb R)$, we have
\begin{align*}
\int_{-\infty}^{\infty} F(\theta) \Lambda^\beta \theta \,dx \ge 0.
\end{align*}
In particular for any $1\le p<\infty$,
\begin{align}
\int_{-\infty}^{\infty} |\theta|^{p-2} \theta \Lambda^{\beta} \theta \,dx \ge 0, \label{Ma4_e1}
\end{align}
provided that the integral is well-defined.
\end{lem}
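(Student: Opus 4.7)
The plan is to use the singular integral representation of $\Lambda^\beta$ together with a symmetrization argument that converts the integral into a manifestly nonnegative double integral via the monotonicity of $F$. First, in the genuinely nonlocal range $0<\beta<2$, recall the pointwise formula
\begin{align*}
\Lambda^\beta \theta(x) = c_\beta\,\mathrm{p.v.}\!\int_{\R}\frac{\theta(x)-\theta(y)}{|x-y|^{1+\beta}}\,dy
\end{align*}
for some constant $c_\beta>0$. I would multiply by $F(\theta(x))$, integrate in $x$, apply Fubini to rewrite as a double integral in $(x,y)$, and then symmetrize by averaging with the swap $x\leftrightarrow y$:
\begin{align*}
\int_{\R} F(\theta)\Lambda^\beta \theta\,dx = \frac{c_\beta}{2}\iint_{\R\times\R}\frac{\bigl(F(\theta(x))-F(\theta(y))\bigr)\bigl(\theta(x)-\theta(y)\bigr)}{|x-y|^{1+\beta}}\,dx\,dy.
\end{align*}
Because $F$ is nondecreasing, the factor $(F(a)-F(b))(a-b)$ is pointwise nonnegative, so the entire integrand is nonnegative and the conclusion follows.

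For the endpoint $\beta=2$, where $\Lambda^2=-\partial_{xx}$, I would integrate by parts: when $F$ happens to be $C^1$ one obtains $\int F(\theta)(-\partial_{xx}\theta)\,dx = \int F'(\theta)(\partial_x\theta)^2\,dx\ge 0$, and for a general nondecreasing $F$ one regularizes via convolution with a standard mollifier to get smooth nondecreasing $F_\eps$, applies the inequality, and passes to the limit using the $L^1$ hypothesis. Alternatively one may let $\beta\uparrow 2$ in the double-integral identity above. The case $\beta=0$ is trivial in the setting of interest; for the special case \eqref{Ma4_e1} one just chooses $F(\theta)=|\theta|^{p-2}\theta$, which is continuous and nondecreasing on $\R$ for every $p\ge 1$, and the resulting integral at $\beta=0$ is $\|\theta\|_{L^p}^p\ge 0$.

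The main technical obstacle is justifying the symmetrization rigorously when $\beta\ge 1$, since the principal-value cancellation is delicate and the naive Fubini application is not absolutely integrable. I would circumvent this by first truncating the kernel to the set $\{|x-y|>\eps\}$, where the double integral is absolutely convergent, applying Fubini and the $x\leftrightarrow y$ swap in that region, and then sending $\eps\to 0^+$. The boundary region $\{|x-y|\le\eps\}$ contributes only a vanishing remainder because the integrand there is antisymmetric to leading order, matching the standard distributional interpretation of the principal-value kernel. The special case \eqref{Ma4_e1} then follows immediately by the choice of $F$ indicated above, completing the lemma.
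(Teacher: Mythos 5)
Your proposal is correct and follows essentially the same route as the paper's proof: the pointwise principal-value representation of $\Lambda^\beta$ for $0<\beta<2$ (understood as a limit over truncated regions $\{|x-y|>\eps\}$), symmetrization in $x\leftrightarrow y$ to produce the manifestly nonnegative factor $\bigl(F(\theta(x))-F(\theta(y))\bigr)\bigl(\theta(x)-\theta(y)\bigr)$, direct integration by parts for $\beta=2$, and mollification of $F$ to reduce to the smooth case. The only cosmetic difference is that you spell out the $\eps$-truncation justification of Fubini and symmetrization slightly more explicitly than the paper does, which is a point in your favor rather than a deviation.
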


\begin{proof}
Without loss of generality, we assume $F$ is a smooth function. In
the general case one can mollify $F$ and deduce the result by a
limiting argument. Consider first $\beta=2$, in this case we just
integrate by parts and obtain
\begin{equation*}
\int_{-\infty}^\infty F(\theta) (-\partial_{xx}) \theta \,dx \notag \\
= \int_{-\infty}^\infty F^{\prime}(\theta) (\partial_x \theta)^2 \,dx \ge 0,
\end{equation*}
since $F$ is non-decreasing.

Now we assume $0<\beta<2$. Recall that for $0<\beta<1$, we have
\begin{align}
(\Lambda^\beta g)(x) = C_\beta \lim_{\epsilon\to 0} \int_{|y-x|>\epsilon}
\frac{g(x)-g(y)}{|x-y|^{1+\beta}} \,dy \label{tp100_1}
\end{align}
for some constant $C_\beta>0$.

For $\beta=1$ one can use $\Lambda g= H \partial_x g$ ($H$ is the Hilbert transform) and
integration parts to
show that the formula \eqref{tp100_1} still holds. For smooth $g$, an equivalent formula (without $\epsilon$-limit)
is given by the expression
\begin{align*}
(\Lambda g)(x) = C \int_{|y-x|<1} \frac{g(x)-g(y) {+} g^{\prime}(x)
(y-x)} {|y-x|^2} \,dy + C\int_{|y-x|>1} \frac{g(x)-g(y)} {|y-x|^2}
\,dy.
\end{align*}

Similarly for $1<\beta<2$ one
can use $\Lambda^{\beta} g = -\Lambda^{\beta-2} \partial_{xx} g$, fractional representation of
the Riesz potential $\Lambda^{-(2-\beta)}$ and integration by
parts (twice) to show that \eqref{tp100_1} also holds in this case. In this case a formula equivalent
to \eqref{tp100_1} is given by
\begin{align*}
(\Lambda^{\beta} g)(x) = C_{\beta} \int_{\mathbb R}
\frac{g(x)-g(y){+}g^{\prime}(x) (y-x)} { |x-y|^{1+\beta}} \,dy.
\end{align*}

In all cases we shall just use fractional representation of
$\Lambda^\beta$ as in \eqref{tp100_1}. Clearly
\begin{align*}
 &\int_{-\infty}^\infty F(\theta) \Lambda^{\beta} \theta \,dx \\
=& C_{\beta} \int_{\mathbb R^1_x} F(\theta(x))
\lim_{\epsilon \to 0} \int_{|y-x|>\epsilon} \frac{\theta(x)-\theta(y)}{|x-y|^{1+\beta}} \,dy \,dx \\
= & \lim_{\epsilon \to 0} C_{\beta} \int_{\mathbb R^1_x} F(\theta(x))
\int_{|y-x|>\epsilon} \frac{\theta(x)-\theta(y)}{|x-y|^{1+\beta}} \,dy \,dx.
\end{align*}
Symmetrizing the above integral in $x$ and $y$, we obtain
\begin{align*}
&\int_{-\infty}^\infty F(\theta) \Lambda^{\beta} \theta \,dx \\
=& \frac 12 C_{\beta}
\lim_{\epsilon \to 0}
\int_{\mathbb R^1_x}\int_{|y-x|>\epsilon} ( F(\theta(x))-F(\theta(y)) )
 \frac{\theta(x)-\theta(y)}{|x-y|^{1+\beta}} \,dx \,dy,
\end{align*}
which is clearly non-negative since $F$ is a non-decreasing function.
\end{proof}

Next we state and prove the local wellposedness and a continuation
criterion for Equation \eqref{e_main}. The main issue is the
nonlocal drift term $\Lambda^{-\alpha} u$ which induces some
integrability constraints on $u$, see Remark \ref{rm23} and Remark
\ref{rm24} below.
\begin{thm}[Local wellposedness and continuation criterion] \label{thm_M1}
Suppose $0<\alpha<1$, $\nu\ge0$ and $0\le \beta \le 2$ in \eqref{e_main}. Assume
$1<p< \frac 1 {\alpha}$ and set $k_0= \max\{ \frac{\frac 1p-\frac 12}{\frac1p-\alpha}, 2\}$.
the initial data $u_0 \in L^{p} \cap H^k$ for some integer $k> k_0$.
Then there exists $T>0$ and a unique solution $u$ to \eqref{e_main}
in the space $ C([0,T), L^p \cap H^k)$ with $u(0)=u_0$.  The solution can be continued beyond
any $T^\prime\ge T$ if
\begin{align}
\int_0^{T^\prime}
 \| \partial_x \Lambda^{-\alpha} u (t) \|_{\infty}
  \,dt <+\infty. \label{Mar6_e9}
\end{align}
Moreover if $\nu>0$ and $0<\beta\le 2$, then the solution has additional regularity,
i.e. $u\in C((0,T), L^p \cap H^{k^\prime})$ for any $k^\prime \ge k$.
\end{thm}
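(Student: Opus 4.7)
My plan is to follow the standard scheme for quasilinear transport-dissipative equations: construct approximate smooth solutions via regularization, derive uniform a priori $L^p\cap H^k$ estimates, pass to the limit, prove uniqueness by a difference estimate, and then revisit the estimates to extract the continuation criterion and the additional smoothing. For the regularization I would either add an artificial viscosity $-\varepsilon\Lambda^2 u$ (making \eqref{e_main} genuinely parabolic with a mollified drift) or use a spectral cutoff in the nonlinearity; in either case the regularized problem is globally well-posed and all estimates below can be rigorously justified at the approximate level.

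The $L^p$ estimate comes from pairing \eqref{e_main} with $|u|^{p-2} u$. The integration-by-parts identity
\begin{align*}
\int (\Lambda^{-\alpha} u)\partial_x u \cdot |u|^{p-2} u\,dx = -\frac{1}{p}\int \partial_x(\Lambda^{-\alpha} u)\,|u|^p\,dx
\end{align*}
(valid for $p>1$), together with the positivity lemma \eqref{Ma4_e1} applied to the dissipative piece, yields the clean differential inequality $\frac{d}{dt}\|u\|_{L^p}^p \le \|\partial_x\Lambda^{-\alpha} u\|_{L^\infty}\|u\|_{L^p}^p$. The hypothesis $1<p<1/\alpha$ is used here both so that the positivity lemma applies and so that the Riesz potential $\Lambda^{-\alpha}$, represented by convolution with $c|x|^{\alpha-1}$, is finite on $L^p\cap L^\infty$ inputs (the tail $(1-\alpha)p'>1$ requires exactly $p<1/\alpha$).

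For the $H^k$ estimate I would apply $\partial_x^k$ to \eqref{e_main} and pair with $\partial_x^k u$. Splitting the transport term as $\partial_x^k\!\left((\Lambda^{-\alpha} u)\partial_x u\right) = (\Lambda^{-\alpha} u)\partial_x^{k+1} u + [\partial_x^k,\Lambda^{-\alpha} u]\partial_x u$, the first piece produces $-\frac{1}{2}\int\partial_x(\Lambda^{-\alpha} u)(\partial_x^k u)^2\,dx$ after integration by parts, and the commutator is controlled by a Kato--Ponce-type inequality
\begin{align*}
\|[\partial_x^k,\Lambda^{-\alpha} u]\partial_x u\|_{L^2}\lesssim \|\partial_x\Lambda^{-\alpha} u\|_{L^\infty}\|u\|_{H^k} + \|\partial_x u\|_{L^\infty}\|u\|_{H^{k-\alpha}}.
\end{align*}
The dissipation contribution $-2\nu\|\Lambda^{\beta/2}\partial_x^k u\|_{L^2}^2\le 0$ is simply discarded for local existence. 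Sobolev embedding (which needs $k>3/2$, hence $k\ge 2$) bounds $\|\partial_x u\|_{L^\infty}$ by $\|u\|_{H^k}$, closing a short-time Gronwall argument. The sharper Bernstein-Littlewood-Paley estimate $\|\partial_x\Lambda^{-\alpha} u\|_{L^\infty}\lesssim \|u\|_{L^p}+\|u\|_{H^k}$, obtained by summing $\|P_N\partial_x\Lambda^{-\alpha} u\|_{L^\infty}\lesssim N^{1-\alpha+1/p}\|u\|_{L^p}$ at low frequencies and $\lesssim N^{3/2-\alpha-k}\|u\|_{H^k}$ at high frequencies, is where the quantitative threshold $k>k_0$ enters: the exponent $(\frac1p-\frac12)/(\frac1p-\alpha)$ is precisely the balance between $L^p$-control of low modes and $H^k$-control of high modes.

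Uniqueness follows from an $L^2$ estimate on the difference $w=u_1-u_2$, combined with the a priori control of both solutions, via Gronwall. For the continuation criterion, the main point is to bound $\|\partial_x u\|_{L^\infty}$ using $\|\partial_x\Lambda^{-\alpha} u\|_{L^\infty}$ in an interpolative way: frequency-localizing and writing $\partial_x u=\Lambda^\alpha(\partial_x\Lambda^{-\alpha} u)$ for low frequencies while using the $H^k$ bound for high ones produces an estimate of the form
\begin{align*}
\|\partial_x u\|_{L^\infty}\lesssim \|\partial_x\Lambda^{-\alpha} u\|_{L^\infty}\log\!\big(e+\|u\|_{H^k}\big)+\text{lower order},
\end{align*}
which, inserted into the $H^k$ differential inequality, closes via a log-Gronwall argument whenever \eqref{Mar6_e9} holds. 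The additional smoothing for $\nu>0$ and $0<\beta\le 2$ is obtained by a standard parabolic bootstrap: testing with $\Lambda^{2k'}u$ for $k'>k$, the extra derivative is paid for by the coercive dissipation $\nu\int_0^t\|\Lambda^{\beta/2}\partial_x^{k'}u\|_{L^2}^2$, and an iteration gains arbitrary regularity at positive times. The main obstacle throughout is closing the $H^k$ estimate driven only by $\|\partial_x\Lambda^{-\alpha} u\|_{L^\infty}$ (so as to match the continuation hypothesis), which is exactly what the interpolation argument enabled by $k>k_0$ achieves.
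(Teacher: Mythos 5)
Your overall architecture (regularize or iterate, $L^p$ plus $H^k$ energy estimates with the positivity lemma, a commutator estimate at top order, then a parabolic bootstrap for $\nu>0$) parallels the paper's proof, but two of your key steps contain genuine gaps. First, the claimed logarithmic inequality $\|\partial_x u\|_{L^\infty}\lesssim \|\partial_x\Lambda^{-\alpha}u\|_{L^\infty}\log(e+\|u\|_{H^k})$ is false for $\alpha>0$. Since $\partial_x u$ carries $\alpha$ more derivatives than $\partial_x\Lambda^{-\alpha}u$, each Littlewood--Paley block costs $\|P_N\partial_x u\|_\infty\sim N^{\alpha}\|P_N\partial_x\Lambda^{-\alpha}u\|_\infty$; this is a \emph{geometric} cost, so summing up to a cutoff $N_0$ gives $N_0^{\alpha}\|\partial_x\Lambda^{-\alpha}u\|_\infty+N_0^{3/2-k}\|u\|_{H^k}$, and optimizing in $N_0$ yields only the power-type bound $\|\partial_x u\|_\infty\lesssim \|\partial_x\Lambda^{-\alpha}u\|_\infty^{1-\sigma}\|u\|_{H^k}^{\sigma}$ with $\sigma=\alpha/(k-\frac32+\alpha)$. (A genuine log is available only for operators of order exactly zero, such as $\partial_x\Lambda^{-1}$; cf.\ Remark \ref{rm2.5}.) Inserting this power bound into your commutator estimate with the crude $\|u\|_{H^{k-\alpha}}\le \|u\|_{H^k}$ makes the differential inequality for $y=\|u\|_{H^k}^2$ superlinear, $\dot y\lesssim g\,y+g^{1-\sigma}y^{1+\sigma/2}$, and then $\int g\,dt<\infty$ no longer prevents blowup: your ``log-Gronwall'' closing of \eqref{Mar6_e9} fails. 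The fix is to interpolate the \emph{second} factor as well: since $\|u\|_{\dot H^{k-\alpha}}=\|\partial_x\Lambda^{-\alpha}u\|_{\dot H^{k-1}}\lesssim \|\partial_x\Lambda^{-\alpha}u\|_\infty^{\sigma}\|u\|_{\dot H^{k}}^{1-\sigma}$, the exponents match and the product becomes exactly $\|\partial_x\Lambda^{-\alpha}u\|_\infty\|u\|_{H^k}^2$, giving a Gronwall inequality linear in $y$ with coefficient $\|\partial_x\Lambda^{-\alpha}u\|_\infty$. This matched Gagliardo--Nirenberg bookkeeping is precisely what the paper does in the estimates following \eqref{Ma6_e11}.

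Second, your uniqueness step (``an $L^2$ estimate on the difference $w=u_1-u_2$'') breaks down for $\frac12\le\alpha<1$. The dangerous term $\int \Lambda^{-\alpha}w\,\partial_x u_2\,w\,dx$ requires a Lebesgue bound on $\Lambda^{-\alpha}w$; Hardy--Littlewood--Sobolev gives $\|\Lambda^{-\alpha}w\|_{(\frac12-\alpha)^{-1}}\lesssim\|w\|_2$ only when $\alpha<\frac12$, and for $\alpha\ge\frac12$ the low frequencies of $\Lambda^{-\alpha}w$ are simply not controlled by $\|w\|_{L^2}$. This is exactly why the paper contracts in $C_t^0L^p_x$ with $p<\frac1\alpha$, where the analogous term is bounded by $\|\Lambda^{-\alpha}w\|_{(\frac1p-\alpha)^{-1}}\|w\|_p^{p-1}\|\partial_x u\|_{\frac1\alpha}$, and controlling $\|\partial_x u\|_{\frac1\alpha}$ via $\|g\|_{\frac1\alpha}\lesssim\|g\|_p^{1-\theta}\|g\|_2^{\theta}$, $\theta=(\frac1p-\alpha)/(\frac1p-\frac12)$, together with a Littlewood--Paley sum is where $k>k_0=1/\theta$ is genuinely needed (Remark \ref{rm23}). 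Relatedly, your attribution of $k_0$ to the Bernstein estimate for $\|\partial_x\Lambda^{-\alpha}u\|_\infty$ does not check out: there the low-frequency sum $\sum_{N\le 1}N^{1-\alpha+\frac1p}\|u\|_p$ converges unconditionally and the high-frequency sum needs only $k>\frac32-\alpha$, which $k\ge 2$ already guarantees, so that estimate never produces the threshold $(\frac1p-\frac12)/(\frac1p-\alpha)$. (A minor point in the same vein: the positivity lemma \eqref{Ma4_e1} holds for all $1\le p<\infty$; the restriction $p<\frac1\alpha$ is only for controlling the low frequencies of the drift.) With the matched interpolation and the $L^p$-based difference estimate substituted for your two flawed steps, the rest of your outline goes through.
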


\begin{rem} \label{rm23}
In Theorem \ref{thm_M1}, to have local wellposedness, we require the
initial data $u_0$ to lie in  $L^p$ for some $1<p{\le} \frac
1{\alpha}$. For $0<\alpha\le \frac 12$, one can just work with pure
$H^k$ spaces. However for $\frac 1 2 < \alpha<1$,
it is essential that $u_0$ belongs $L^p$ for some small $p$ in order
to control the low frequency part. The regularity condition $k\ge
k_0$ comes from bounding the quantity
 $\| \partial_x u\|_{\frac 1 {\alpha}}$ when we perform contraction estimates in $C_t^0 L^p_x$
 (see e.g. \eqref{Mar6_e5}).
\end{rem}

\begin{rem} \label{rm24}
Instead of the space $L^p\cap H^k$, one can also choose the space
$W^{k,p}$ for $1<p<\frac 1{\alpha}$ when $\frac 12 <\alpha <1$.
Another possibility is the space $H^{-\delta} \cap H^{k}$ for some
$\delta>0$. {Of course, the choice of $k$ in Theorem \ref{thm_M1} is
not optimal.} We shall not dwell on these issues here. In any case
these spaces provide natural $L^\infty$ bounds on the drift term
$\Lambda^{-\alpha} u$ and produce classical solutions.
\end{rem}

\begin{proof}[Proof of Theorem \ref{thm_M1}]
The proof is more or less a standard application of energy estimates
and hence we only sketch the details here. Define $u^{(0)}=u_0$, and
for $n\ge 0$ inductively define the iterates $u^{(n+1)}$ such that
\begin{align} \label{Ma3_e1}
\begin{cases}
\partial_t u^{(n+1)} = (\Lambda^{-\alpha} u^{(n)}) \partial_x u^{(n+1)} - \nu \Lambda^\beta u^{(n+1)},
\\
u^{(n+1)}(0)=u_0.
\end{cases}
\end{align}
Note that the system \eqref{Ma3_e1} is linear in $u^{(n+1)}$. By a simple induction we
have $u^{(n)} \in C([0,\infty), L^p \cap H^k)$ for all $n\ge 0$ and furthermore for some
$T>0$ depending only on $u_0$,
\begin{align}
\sup_{n\ge 0} \Bigl( \|u^{(n)}\|_{L_t^\infty H^k([0,T))}+
\| u^{(n)} \|_{L_t^\infty L^p([0,T))} \Bigr) \le A_1<\infty, \label{Ma3_e3}
\end{align}
where $A_1>0$ is a constant. Note that in deriving \eqref{Ma3_e3}, we have used
\eqref{Ma4_e1}.

The next step is to show contraction in $C([0,T_1], L_x^p)$ for some $T_1$ possibly smaller than $T$.
Define $w^{(n+1)} = u^{(n+1)} - u^{(n)}$. By \eqref{Ma3_e1}, we have
\begin{align*}
\partial_t w^{(n+2)} = \Lambda^{-\alpha} u^{(n+1)} \partial_x w^{(n+2)} + \Lambda^{-\alpha}
w^{(n+1)} \partial_x u^{(n+1)} - \nu \Lambda^{\beta} w^{(n+2)}.
\end{align*}

Multiplying both sides by $|w^{(n+2)}|^{p-2} w^{(n+2)}$, integrating by parts and using
again \eqref{Ma4_e1}, we have
\begin{align*}
\frac 1 p  \frac d {dt}
\int_{\mathbb R} |w^{(n+2)}|^p \,dx
&\le \frac 1p \int_{\mathbb R} |\partial_x \Lambda^{-\alpha} u^{(n+1)} | |w^{(n+2)}|^p \,dx \notag \\
&\qquad + \int_{\mathbb R} |\Lambda^{-\alpha} w^{(n+1)} | |w^{(n+2)}|^{p-1}
|\partial_x u^{(n+1)} | \,dx.
\end{align*}
By H\"older and Sobolev embedding,
\begin{align}
\frac d {dt} \Bigl( \| w^{(n+2)} (t) \|^p_p \Bigr)
& \lesssim \| \partial_x \Lambda^{-\alpha} u^{(n+1)} (t) \|_\infty \| w^{(n+2)} (t) \|_p^p \notag \\
& \qquad + \| \Lambda^{-\alpha} w^{(n+1)} (t) \|_{{(\frac 1p -\alpha)}^{-1}}
\| w^{(n+2)}(t) \|_p^{p-1} \| \partial_x u^{(n+1)} (t) \|_{\frac 1 {\alpha}} \notag\\
& \lesssim \| u^{(n+1)}(t) \|_{H^2} \| w^{(n+2)}(t) \|_p^p \notag \\
& \qquad + \| w^{(n+1)}(t) \|_p \cdot \|w^{(n+2)}(t)\|_p^{p-1} \cdot
\| \partial_x u^{(n+1)} (t) \|_{\frac 1 {\alpha}}. \label{Mar6_e1}
\end{align}

To bound $\| \partial_x u^{(n+1)}(t) \|_{\frac 1 {\alpha}}$ we discuss two cases. If $0<\alpha\le \frac 12$,
then one can use Sobolev embedding to get
\begin{align}
\| \partial_x u^{(n+1)} (t) \|_{\frac 1 {\alpha}} \lesssim \| u^{(n+1)}(t) \|_{H^2}. \label{Mar6_e3}
\end{align}

If $\frac 12<\alpha<1$, then we use the interpolation inequality
\begin{align*}
\|g\|_{\frac 1 {\alpha}} \lesssim \| g\|_p^{1-\theta}
\| g\|_2^{\theta},\quad \theta=\frac{\frac 1p -\alpha}{\frac 1p -\frac12}
\end{align*}
to get
\begin{align}
\| \partial_x u^{(n+1)} \|_{\frac 1{\alpha}} &\lesssim \| u^{(n+1)}\|_p + \sum_{N>1} N
\| P_N u^{(n+1)}\|_p^{1-\theta} \| P_N u^{(n+1)} \|_2^{\theta} \notag\\
&\lesssim \| u^{(n+1)} \|_p + \| u^{(n+1)} \|_{H^k}, \label{Mar6_e5}
\end{align}
where we have used the fact that $k>k_0 \ge \frac 1 {\theta}$. Plugging the bounds
\eqref{Mar6_e3}--\eqref{Mar6_e5} into \eqref{Mar6_e1} and using \eqref{Ma3_e3}, we obtain
\begin{align*}
\frac {d}{dt} \Bigl( \| w^{(n+2)}(t) \|_p^p \Bigr) \lesssim A_1\cdot \Bigl(\| w^{(n+2)}(t) \|_p^p
+ \|w^{(n+1)}(t)\|_p \cdot \| w^{(n+2)}(t) \|_p^{p-1} \Bigr).
\end{align*}
By using the fact $w^{(n+1)}(0)=0$ for all $n\ge 0$, and choosing $T_1$ smaller than $T$ if necessary,
we get for some $0<\tilde {\theta}<1$ that
\begin{align*}
\|w^{(n+2)} (t) \|_{L_t^\infty L_x^p([0,T_1])} \le \tilde {\theta}
\|w^{(n+1)} (t) \|_{L_t^\infty L_x^p([0,T_1])},\quad \forall\, n\ge 0.
\end{align*}
Hence the sequence $u^{(n)}$ has a strong limit in $C([0,T_1], L_x^p)$.

By interpolation one also obtains strong convergence and the limit
solution $u$ in $C([0,T_1], H^{k-1})$. By a standard argument (one
has to discuss separately the case $\nu=0$ and the case $\nu>0$, and
the fact that strong continuity in $H^k$ is equivalent to weak
continuity together with norm continuity), one can show that $u \in
C([0,T_1], H^k)$. We omit the details.

Now we turn to the proof of the continuation criterion \eqref{Mar6_e9}. By \eqref{Ma4_e1}, it is easy to show that
\begin{align}
\frac {d} {dt} ( \| u(t)\|_p^p) \lesssim \| \partial_x \Lambda^{-\alpha} u(t) \|_{\infty} \| u(t) \|_p^p,
\label{Mar6_e13}
\end{align}
and hence $\|u(t)\|_p$ is controlled by \eqref{Mar6_e9}. It remains
to control the $H^k$ norm. From \eqref{e_main}, using integration by
parts and H\"older, we have

\begin{align}
&\frac d {dt} \Bigl( \| \partial_x^k u(t) \|_2^2 \Bigr)\notag\\
& \lesssim \int_{\mathbb R} \partial_x^k( \Lambda^{-\alpha} u \partial_x u ) \partial_x^k u \,dx \notag \\
& \lesssim \| \partial_x \Lambda^{-\alpha} u(t) \|_\infty \| \partial_x^k u(t) \|_2^2 \notag \\
& \quad + \| \partial_x^k u(t) \|_2\sum_{1\le l\le k} \|
\partial_x^{l-1} \partial_x \Lambda^{-\alpha} u(t)
\|_{\frac{2(k-1+\alpha)}{l-1}} \cdot \|
\partial_x^{k-l}\Lambda^{\alpha} \partial_x \Lambda^{-\alpha} u
\|_{\frac{2(k-1+\alpha)}{k-l+\alpha}}. \label{Ma6_e11}
\end{align}

By using the Gagliardo--Nirenberg inequalities, we have for any $1\le l \le k$,
\begin{align*}
&\| \partial_x^{l-1} \partial_x \Lambda^{-\alpha} u(t) \|_{\frac{2(k-1+\alpha)}{l-1}}
\lesssim \| \partial_x \Lambda^{-\alpha} u(t) \|_{\infty}^{1-\frac{l-1}{k-1+\alpha}}
\cdot \| \partial_x^k u(t)  \|_2^{\frac{l-1}{k-1+\alpha}}, \\
& \| \partial_x^{k-l} \Lambda^{\alpha} \partial_x \Lambda^{-\alpha} u \|_{\frac{2(k-1+\alpha)}{k-l+\alpha}}
\lesssim \| \partial_x \Lambda^{-\alpha} u \|_{\infty}^{1-\frac{k-l+\alpha}{k-1+\alpha}}
\cdot \| \partial_x^{k}   u \|_2^{\frac{k-l+\alpha}{k-1+\alpha}}.
\end{align*}

Plugging the above estimates into \eqref{Ma6_e11}, and we obtain
\begin{align*}
\frac{d}{dt} \Bigl( \| \partial_x^k u(t) \|_2^2 \Bigr) & \lesssim
\| \partial_x \Lambda^{-\alpha} u(t) \|_{\infty} \cdot
\|\partial_x^k u(t)\|_2^2.
\end{align*}
Together with \eqref{Mar6_e13} this easily yields \eqref{Mar6_e9}.

Finally in the case $\nu>0$ and $0<\beta\le 2$, one can use the
theory of mild solutions or simple energy estimates to gain
additional regularity. See, for instance, \cite{D08}. We omit the
details.
\end{proof}
\begin{rem}
                    \label{rm2.5}
From the logarithmic type bound
$$
\|\partial_x\Lambda^{-1}f\|_{L^\infty}\leq C\Big[1+\|f\|_{L^\infty}\log(e+\|f
\|_{H^1})+\|f\|_{L^2}\Big],
$$
it is easily seen that the condition \eqref{Mar6_e9} can be replaced by
$$
\int_0^{T'}\|\Lambda^{1-\alpha} u(t)\|_{\infty}\,dt<\infty.
$$
However, we shall not use this fact in the sequel.
\end{rem}

For the proof of Theorem \ref{thm3}, when $\beta<2$ we shall use the
idea of the non-local maximum principle for a suitably chosen
modulus of continuity. This method was first used by Kiselev,
Nazarov, and Volberg in \cite{kiselev}, where they proved the global
regularity for the 2D critical dissipative quasi-geostrophic
equations with periodic $C^\infty$ data. In the borderline case when
$\beta=2$, this argument does not seem to work  since \eqref{eq1.23}
below no longer holds. Therefore, we will use a different approach
in this case.

We say a function $f$ has modulus of continuity $\omega$ if
$|f(x)-f(y)|\leq \omega(|x-y|)$, where $\omega$ is an increasing
continuous function $\omega$ : $[0,+\infty)\to [0,+\infty)$ and $\omega(0)=0$.
We say $f$ has strict modulus of continuity $\omega$ if the
inequality is strict for $x\neq y$.

In what follows, we will choose a concave function $\omega$ satisfying
\begin{equation}
                    \label{eq10.18}
\omega'>0,\quad \omega'(0)<+\infty,\quad
\lim_{\xi\to 0^+} \omega''(\xi)=-\infty,
\end{equation}

Owing to Theorem \ref{thm_M1} and the Sobolev imbedding theorem, we may assume $\theta_0\in H^{20}\cup C^\infty$. Because of
the scaling property of \eqref{e_main}, for any $\lambda>0$,
$$
u_\lambda(t,x)=\lambda^{\alpha+\beta-1}u(\lambda^{\beta}t,\lambda x)
$$
is also a solution of \eqref{e_main} with initial data
$\lambda^{\alpha+\beta-1}u_0(\lambda^\beta x)$. Thus if we can show that
$u_\lambda$ is a global solution, the same remains true for $u$.
Note that for any $\omega$ satisfying \eqref{eq10.18} we can always
find a constant $\lambda>0$ such that $\omega(\xi)$ is a {\em strict}
modulus of continuity of $\lambda^{\alpha+\beta-1}u_0(\lambda^\beta x)$ provided that $\alpha+\beta> 1$. While in the critical case, i.e., $\alpha+\beta=1$, this still holds for any unbounded $\omega$ satisfying \eqref{eq10.18}.

We shall show that for suitably chosen $\omega$, the modulus of
continuity is preserved for all the time. This gives $\|\partial_x u(t)\|_{\infty}\le \omega'(0)$, which together with the uniform boundedness of $\|u(t)\|_{\infty}$ and Theorem \ref{thm_M1} implies Theorem \ref{thm3}.  Following the argument in \cite{kiselev} and \cite{dongdu}, the strict modulus of continuity is preserved at
least for a short time. Also it is clear that if $u(t,\cdot)$ has strict modulus of continuity $\omega$ for all $t\in [0,T)$, then $\theta$ is smooth up to $T$
and $u(T,\cdot)$ has modulus of continuity $\omega$ by continuity. Therefore, to show that the modulus of continuity is
preserved for all the time, it suffices to rule out the case that
\begin{equation*}
\sup_{x\neq y}\frac {u(T,x)-u(T,y)}{\omega(|x-y|)}=1,
\end{equation*}
which in turn implies that there exist two different points $x,y\in \R$ satisfying
\begin{equation*}
\theta(T,x)-\theta(T,y)=\omega(|x-y|)
\end{equation*}
due to  \eqref{eq10.18} and the decay of $u$ at the spatial infinity (see \cite{dongdu}).
This possibility can be eventually ruled out if we are able to chose
suitable $\omega$ such that under the conditions above we have
\begin{equation}
                            \label{eq24.5.14}
\frac {\partial}{\partial t}(u(T,x)-u(T,y))<0.
\end{equation}
To this end, we need the following two lemmas.

\begin{lem}             \label{lem3.4}
Assume that $u$ has modulus of continuity $\omega$, which is a concave function. Then for any $\alpha\in (0,1)$, $\Lambda^{-\alpha}u$ has modulus of continuity
\begin{equation}
                                                \label{eq31.2.45}
\Omega(r):=C_{\alpha}\Big(r^\alpha \omega(r)+r\int_r^\infty \frac {w(s)} {s^{2-\alpha}}\,ds\Big).
\end{equation}
\end{lem}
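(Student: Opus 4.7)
The plan is to expand $\Lambda^{-\alpha}$ via its Riesz kernel and exploit a reflection symmetry about the midpoint $m=(x+x')/2$. Setting $r=|x-x'|$, one has
\begin{equation*}
\Lambda^{-\alpha}u(x)-\Lambda^{-\alpha}u(x')=c_\alpha\int_{\R} u(y)\,K(y)\,dy,\qquad K(y):=|y-x|^{\alpha-1}-|y-x'|^{\alpha-1},
\end{equation*}
for some explicit $c_\alpha>0$. I would split this integral into a near field $A=\{|y-m|\le 2r\}$ and a far field $B=\{|y-m|>2r\}$. The decisive observation is that the involution $y\mapsto 2m-y$ sends $|y-x|$ to $|y-x'|$ (because $2m-x=x'$), so $K(2m-y)=-K(y)$; since $A$ and $B$ are each invariant under this involution and $K$ is absolutely integrable on each piece, one has $\int_A K\,dy=\int_B K\,dy=0$. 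I may therefore replace $u(y)$ by $u(y)-u(m)$ in each piece without altering the value, which is exactly what lets me bring the modulus $\omega$ into play.

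For the near field I would use the crude bound $|u(y)-u(m)|\le\omega(|y-m|)\le\omega(2r)$ together with the triangle inequality $|K(y)|\le|y-x|^{\alpha-1}+|y-x'|^{\alpha-1}$. Since $A$ lies inside both $\{|y-x|\le 3r\}$ and $\{|y-x'|\le 3r\}$, each kernel integrates to $\lsm r^\alpha$ on $A$; concavity of $\omega$ (which forces $\omega(2r)\le 2\omega(r)$) then yields a contribution $\lsm r^\alpha\omega(r)$. For the far field I would apply the elementary one-variable mean value estimate
\begin{equation*}
|K(y)|\lsm r\,|y-m|^{\alpha-2}\qquad\text{for }|y-m|>2r,
\end{equation*}
obtained by differentiating $s\mapsto|y-(sx+(1-s)x')|^{\alpha-1}$ on $[0,1]$ and using that the argument stays within $r/2$ of $m$. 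Combined with $|u(y)-u(m)|\le\omega(|y-m|)$ and polar integration in $s=|y-m|$, this gives the bound $\lsm r\int_{2r}^\infty\omega(s)s^{\alpha-2}\,ds\lsm r\int_r^\infty\omega(s)/s^{2-\alpha}\,ds$. Summing the near- and far-field contributions and absorbing constants into $C_\alpha$ produces exactly $\Omega(r)$.

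The main obstacle is not any deep analytic difficulty but the bookkeeping that justifies the symmetry cancellation: one must confirm absolute integrability of $K$ separately on $A$ (trivial, since $|y-x|^{\alpha-1}$ is locally integrable for $\alpha>0$) and on $B$ (which requires the mean-value bound above to supply decay at infinity, where $\alpha<1$ is essential). Concavity of $\omega$ plays a double role: it supplies $\omega(2r)\le 2\omega(r)$ to collapse the near-field bound to the stated form, and together with $\alpha<1$ it ensures that the tail integral $\int_r^\infty\omega(s)/s^{2-\alpha}\,ds$ is finite so that $\Omega(r)$ is actually a legitimate modulus of continuity. With these verifications in place, the remainder of the argument is a routine one-dimensional integration.
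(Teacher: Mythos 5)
Your proof is correct and takes essentially the same route as the paper's: both expand the difference $\Lambda^{-\alpha}u(x)-\Lambda^{-\alpha}u(x')$ through the Riesz kernel, subtract the value of $u$ at the midpoint (the paper normalizes the midpoint to $0$ and subtracts $u(0)$, relying implicitly on the reflection antisymmetry of the difference kernel that you verify explicitly), split at distance $\sim r$ from the midpoint, bound the near field crudely by $r^\alpha\omega(r)$, and bound the far field via the mean value theorem to get $r\int_r^\infty \omega(s)s^{\alpha-2}\,ds$. One small caveat on a side remark only: your claim that concavity of $\omega$ together with $\alpha<1$ ensures $\int_r^\infty \omega(s)s^{\alpha-2}\,ds<\infty$ is false (take $\omega(s)=s$, which is concave but gives $\int_r^\infty s^{\alpha-1}\,ds=\infty$); this does not affect the lemma, whose conclusion is vacuous when $\Omega\equiv\infty$, and in the paper's application the chosen moduli grow at most logarithmically so the tail converges.
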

\begin{proof}
For any $r>0$, take $x,y\in \R$ such that $|x-y|=r$. Without loss of generality, we may assume $x=-r/2$ and $y=r/2$. Then
$$
|\Lambda^{-\alpha} u(x)-\Lambda^{-\alpha} u(y)|=C_\alpha\Big|\int_{-\infty}^\infty \frac {u(s)-u(0)} {|r/2+s|^{1-\alpha}}\,ds-\int_{-\infty}^\infty \frac {u(s)-u(0)} {|-r/2+s|^{1-\alpha}}\,ds\Big|
$$
$$
\le I_1+I_2.
$$
Here
$$
I_1:=C_\alpha\Big|\int_{-r}^r \frac {u(s)-u(0)} {|r/2+s|^{1-\alpha}}\,ds\Big|
+C_\alpha\Big|\int_{-r}^r \frac {u(s)-u(0)} {|-r/2+s|^{1-\alpha}}\,ds\Big|\le C_\alpha r^\alpha \omega(r),
$$
and
$$
I_2:=C_\alpha \left(\int_{-\infty}^{-r}+\int_{r}^\infty\right)|u(s)-u(0)|\Big|
\frac 1 {|r/2+s|^{1-\alpha}}-\frac 1 {|-r/2+s|^{1-\alpha}}\Big|
\le C_{\alpha}r\int_r^\infty \frac {w(s)} {s^{2-\alpha}}\,ds,
$$
where in the last inequality we have used the mean value theorem. The lemma is proved.
\end{proof}

\begin{lem}
                            \label{lem3.5}
Assume $u$ and $\Lambda^{-\alpha}u$ have moduli of continuity $\omega$ and
$\Omega$ respectively, and $u(x)-u(y)=\omega(|x-y|)$ for
some $x\neq y$.

i) Then we have
\begin{equation*}
|\Lambda^{-\alpha}u(x) \partial_x u(x)-\Lambda^{-\alpha}u(y) \partial_x u(y)|\leq
\Omega(|x-y|)\omega'(|x-y|).
\end{equation*}

ii) For any $\beta\in (0,2)$, it holds that
\begin{align}
-\Lambda^\beta u(x)+\Lambda^\beta u(y)\leq&
C_\beta\int_0^{\frac {r} 2}\frac
{\omega(r+2s)+\omega(r-2s)-2\omega(r)} {r^{1+\beta}}\,ds\nonumber\\
&+C_\beta\int_{\frac {r} 2}^\infty\frac
{\omega(2s+r)-\omega(2s-r)-2\omega(r)} {r^{1+\beta}}\,ds,   \label{eq1.23}
\end{align} where $r=|x-y|$.
\end{lem}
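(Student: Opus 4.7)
For part (i), the plan is to exploit first-order optimality at the touching pair. Since $u$ has modulus of continuity $\omega$, the function $\Phi(a,b):=u(a)-u(b)-\omega(|a-b|)$ is nonpositive everywhere and attains its maximum $0$ at $(a,b)=(x,y)$ with $x\neq y$. Differentiating in $a$ and in $b$ yields the first-order condition
\[
\partial_x u(x)=\omega'(r)\,\mathrm{sgn}(x-y)=\partial_x u(y),\qquad r:=|x-y|,
\]
so the two derivatives are equal and each has absolute value $\omega'(r)$. The claim then follows from the algebraic identity
\[
\Lambda^{-\alpha}u(x)\partial_x u(x)-\Lambda^{-\alpha}u(y)\partial_x u(y)=\partial_x u(x)\bigl[\Lambda^{-\alpha}u(x)-\Lambda^{-\alpha}u(y)\bigr]+\Lambda^{-\alpha}u(y)\bigl[\partial_x u(x)-\partial_x u(y)\bigr],
\]
in which the last bracket vanishes and the first is controlled by $\omega'(r)\Omega(r)$ using Lemma \ref{lem3.4}.

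For part (ii), I would start from the singular-integral representation valid for all $\beta\in(0,2)$,
\[
\Lambda^\beta v(z)=c_\beta\int_0^\infty\frac{2v(z)-v(z+h)-v(z-h)}{h^{1+\beta}}\,dh,
\]
evaluate it at $z=x$ and $z=y$, subtract, and use the identity $u(x)-u(y)=\omega(r)$ to absorb the diagonal contributions. This gives $-\Lambda^\beta u(x)+\Lambda^\beta u(y)=c_\beta\int_0^\infty h^{-1-\beta}N(h)\,dh$, where
\[
N(h):=u(x+h)+u(x-h)-u(y+h)-u(y-h)-2\omega(r)
\]
can equivalently be written in the crosswise form $[u(x+h)-u(y-h)]+[u(x-h)-u(y+h)]-2\omega(r)$. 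I would then split the integration at $h=r/2$ and bound $N(h)$ pointwise by applying the modulus $\omega$ to the actual separation $|2h-r|$ of the first crosswise pair and $2h+r$ of the second.

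For $0<h<r/2$ this yields $N(h)\le\omega(r-2h)+\omega(r+2h)-2\omega(r)$, which is nonpositive by concavity of $\omega$ and matches the first integrand. For $h>r/2$, combining the crosswise estimate with the trivial parallel bound $N(h)\le 0$ (from pairing $(x+h,y+h)$ and $(x-h,y-h)$ at distance $r$) and using subadditivity of $\omega$ produces the second integrand. The main technical difficulty lies precisely in this outer range: the parallel estimate alone gives no quantitative margin to beat the transport term $\Omega(r)\omega'(r)$ from part (i), while the raw crosswise estimate is too loose for large $h$, so one must interpolate between them. Integrability at $h=\infty$ is then automatic from the bound $\omega(2h+r)-\omega(2h-r)\le\omega(2r)$ (a consequence of concavity with $\omega(0)=0$) together with the decay factor $h^{-1-\beta}$.
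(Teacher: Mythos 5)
Your part (i) is correct and coincides with the standard argument of \cite{kiselev} (the paper itself gives no proof of this lemma, only the citation to \cite{kiselev} and \cite{D08}): first-order optimality at the touching pair gives $\partial_x u(x)=\partial_x u(y)=\omega'(r)\,\text{sgn}(x-y)$, after which the product decomposition and Lemma \ref{lem3.4} finish. Your inner-region estimate in part (ii) is also valid, and your weight $h^{-1-\beta}$ is in fact the correct one: the $r^{1+\beta}$ in the denominators of \eqref{eq1.23} is a typo for $s^{1+\beta}$ (with $r^{1+\beta}$ the second integral would be $-\infty$, and the later bounds $I_4\lesssim -\delta r^{2-\beta-\alpha}$, $I_5\lesssim -\delta r^{-\beta}\log r$ require $s^{1+\beta}$).

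The genuine gap is your outer region $h>r/2$: no pointwise bound $N(h)\le T(h):=\omega(2h+r)-\omega(2h-r)-2\omega(r)$ holds, and no interpolation between your two pairings can produce it, because $T(h)$ is in general strictly below both the crosswise bound $\omega(2h+r)+\omega(2h-r)-2\omega(r)$ and the parallel bound $0$; indeed $T(h)\to -2\omega(r)<0$ as $h\to\infty$ when $\omega$ is bounded. A concrete counterexample: let $\omega$ be bounded concave with $\lim_{s\to\infty}\omega(s)=M>2\omega(r)$, let $v=0$ on $(-\infty,y]$, $v(z)=\omega(z-y)$ on $[y,x]$, $v=\omega(r)$ on $[x,\infty)$, and $u=v+v(\cdot-L)$ with $L$ large (two ramps of height $\omega(r)$ at mutual distance $L-r$; $u$ has modulus $\omega$ since $2\omega(r)\le\omega(L-r)$, and $u(x)-u(y)=\omega(r)$). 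Then $N(L)=2\omega(r)+0-\omega(r)-0-2\omega(r)=-\omega(r)$, while $T(L)\to-2\omega(r)$, so $N(L)>T(L)$ for $L$ large (mollifying $u$ changes nothing essential). The strictly negative outer integrand is exactly what the paper's Cases 1 and 2 exploit, so weakening it to $N\le 0$ destroys the proof of Theorem \ref{thm3}. The correct derivation, as in \cite{kiselev}, works at the level of the kernel rather than of the second difference $N(h)$: placing the midpoint of $x,y$ at the origin (so $x=r/2$, $y=-r/2$) and pairing each point $\zeta$ with its reflection $-\zeta$ gives
\begin{equation*}
-\Lambda^\beta u(x)+\Lambda^\beta u(y)
=c_\beta\,\mathrm{P.V.}\!\int_0^\infty\Bigl(g(\zeta)\bigl[K_-(\zeta)-K_+(\zeta)\bigr]
-\omega(r)\bigl[K_-(\zeta)+K_+(\zeta)\bigr]\Bigr)\,d\zeta,
\end{equation*}
where $g(\zeta)=u(\zeta)-u(-\zeta)$ and $K_\mp(\zeta)=|\zeta\mp r/2|^{-1-\beta}$. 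Since $K_--K_+\ge 0$ one may insert $g(\zeta)\le\omega(2\zeta)$, and the substitutions $\zeta=r/2\pm s$ (for $K_-$) and $\zeta=s-r/2$ (for $K_+$) regroup the contributions of \emph{different} points $\zeta$ at the same radial parameter $s$; it is the $K_+$ piece entering with a minus sign that produces the crucial term $-\omega(2s-r)$, something your pointwise estimates on $N(h)$ cannot see.
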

\begin{proof}
See \cite{kiselev}, or \cite{D08}.
\end{proof}

Due to Lemmas \ref{lem3.4} and \ref{lem3.5}, the left-hand side  of
\eqref{eq24.5.14} is less than or equal to $I_3+I_4+I_5$, where
\begin{align*}
I_3=\Omega(r)\omega'(r),\quad
I_4=C_\beta\int_0^{\frac {r} 2}\frac
{\omega(r+2s)+\omega(r-2s)-2\omega(r)} {r^{1+\beta}}\,ds,\\
I_5=C_\beta\int_{\frac {r} 2}^\infty\frac
{\omega(2s+r)-\omega(2s-r)-2\omega(r)} {r^{1+\beta}}\,ds,\quad
r=|x-y|,
\end{align*}
where $\Omega(r)$ is given in \eqref{eq31.2.45}.
With concave $\omega$, both $I_4$ and $I_5$ are strictly
negative.

Now we are ready to prove Theorem \ref{thm3}.

\begin{proof}[Proof of Theorem \ref{thm3}]
Let $\delta>0$ be a small number to be specified later. We consider three cases separately.

{\em Case 1: $\beta=1-\alpha$.} Set for $r>0$,
\begin{equation*}
\omega''(r)=-\frac{\delta}{r^{\alpha}+r^2},\quad
\omega'(r)=-\int_r^\infty \omega''(s)\,ds,\quad \omega(0)=0,
\end{equation*}
which clearly satisfies \eqref{eq10.18}. Since $\alpha\in (0,1)$, it is easily seen that, for
$r\ll 1$,
\begin{align}
                                                \label{eq24.9.26}
\omega'(r)\sim \delta,\quad \omega(r)\sim \delta
r,\quad\Omega(r)\sim \delta r,\nonumber\\
I_3=\Omega\omega'\sim \delta^2 r,\quad I_4\leq C\delta
r^{2-\beta}\omega''(r)\sim -\delta r^{2-\beta-\alpha}.
\end{align}
The last inequality is because of the mean value theorem and the
monotonicity of $\omega''$. Similarly, for $r\gg 1$,
\begin{align}
                                                \label{eq24.9.27}
\omega'(r)&\sim \delta r^{-1},\quad \omega(r)\sim
\delta\log r,\quad\Omega(r)\sim \delta r^\alpha\log r,\nonumber\\
I_3&=\Omega\omega'\sim\delta^2 r^{\alpha-1}\log r,\quad I_5\leq
-C\delta r^{-\beta}\log r.
\end{align}
For the bound of $I_5$, we have used $\omega(2s+r)-\omega(2s-r)\le \omega(2r)$.
Recall that $\alpha+\beta= 1$. Here and in what follows, the constant $C$ is independent of $\delta$.
From \eqref{eq24.9.26}, \eqref{eq24.9.27} and the continuity, we
can choose $\delta$ sufficiently small such that $I_3+I_4+I_5$ is
strictly negative on $(0,+\infty)$. Thus, we obtain
\eqref{eq24.5.14}.

{\em Case 2: $\beta\in (1-\alpha,2)$.}
Set for $r>0$,
\begin{equation*}
\omega''(r)=-\frac{\delta}{r^{\alpha}+r^5},\quad
\omega'(r)=-\int_r^\infty \omega''(s)\,ds,\quad \omega(0)=0,
\end{equation*}
which clearly satisfies \eqref{eq10.18}. We still have \eqref{eq24.9.26} for
$r\ll 1$. For $r\gg 1$,
\begin{align}
                                                \label{eq24.9.27b}
\omega'(r)&\sim \delta r^{-4},\quad \omega(r)\sim
\delta,\quad\Omega(r)\sim \delta r^\alpha,\nonumber\\
I_3&=\Omega\omega'\sim\delta^2 r^{\alpha-4},\quad I_5\leq
-C\delta r^{-\beta}.
\end{align}
Recall that $\alpha+\beta\in (1,4)$.
From \eqref{eq24.9.26}, \eqref{eq24.9.27b} and the continuity, we
can choose $\delta$ sufficiently small such that $I_3+I_4+I_5$ is
strictly negative on $(0,+\infty)$. Therefore, we still obtain
\eqref{eq24.5.14} in this case.

{\em Case 3: $\beta=2$.} In this case, we shall use a different
argument since the bound \eqref{eq1.23} does not hold any more. Due
to the classical Sobolev theory for second-order parabolic
equations\footnote{See also Remark \ref{rmdifferent} for a different
proof.}, it suffices for us to show that
\begin{equation}
                                \label{eq1.29}
\|\Lambda^{-\alpha}u\|_{L_t^\infty L_x^\infty([0,T])}<\infty
\end{equation}
for any finite $T>0$. Indeed, by the classical theory \eqref{eq1.29} guarantees that
$u,\partial_x,\partial_{xx}u, u_t\in L^2([0,T]\times \mathbb R)$, which together with the Sobolev imbedding theorem implies \eqref{Mar6_e9}.

In order to prove \eqref{eq1.29}, first we note that by the maximum principle, for any $t>0$,
\begin{equation}
                                        \label{eq1.52}
\| u(t) \|_\infty\le \| u(0) \|_\infty.
\end{equation}
Using the Littlewood--Paley projectors, the Bernstein inequality, and \eqref{eq1.52}, we have
\begin{align}
                            \label{eq2.36}
\| \Lambda^{-\alpha} u(t)\|_\infty &\lesssim \| u(t) \|_\infty + \sum_{\text{N dyadic}: N\le 1} \| \Lambda^{-\alpha} P_{N} u(t) \|_{\infty}\nonumber \\
& \lesssim \|u(0)\|_\infty+ \sum_{N\le 1} N^{-\alpha} N^{\frac 1 p} \| u(t)\|_{p}\nonumber\\
& \lesssim \|u(0)\|_\infty+\|u(t)\|_{p},
\end{align}
where $p$ is from Theorem \ref{thm_M1}, and in third inequality we used
the fact that $\alpha<\frac 1 p$. Multiplying both sides of \eqref{e_main} by $|u|^{p-2}u$ and integrating by parts, we have
\begin{align}
                                \label{eq2.32}
\frac 1 p  \frac d {dt}
\int_{\mathbb R} |u|^p \,dx
&= \int_{\mathbb R} \Lambda^{-\alpha} u  \partial_x u |u|^{p-2} u \,dx+ \nu\int_{\mathbb R} \partial_{xx}u |u|^{p-2}u \,dx\nonumber\\
&= I_1+I_2- \frac{4\nu (p-1) }{p^2}\int_{\mathbb R} \Big(\partial_x |u|^{\frac p 2}\Big)^2 \,dx,
\end{align}
where
$$
I_1:=\int_{\mathbb R} \big(P_{<1}\Lambda^{-\alpha} u\big)  \partial_x u |u|^{p-2} u \,dx,\quad
I_2:=\int_{\mathbb R} \big(P_{\ge 1}\Lambda^{-\alpha} u\big)  \partial_x u |u|^{p-2} u \,dx.
$$
To estimate $I_1$, we integrate by parts and use H\"older's inequality, the Bernstein inequality, and \eqref{eq1.52} to get
\begin{align}
                                \label{eq2.06}
|I_1|&\le \int_{\mathbb R} |P_{<1}\partial_x \Lambda^{-\alpha} u|  |u|^{p} \,dx\nonumber\\
&\le \left\|P_{<1}\partial_x \Lambda^{-\alpha} u(t)\right\|_p \left\|u(t)\right\|_p^{p-1} \left\|u(t)\right\|_\infty\le C\left\|u(t)\right\|_p^{p}.
\end{align}
To bound $I_2$, by H\"older's inequality, the Bernstein inequality, and \eqref{eq1.52} we have
\begin{align}
                                \label{eq2.26}
|I_2|&\le C\left\|P_{\ge 1} \Lambda^{-\alpha} u(t)\right\|_\infty \left\|\partial_x |u|^{\frac p 2}\right\|_2 \left\||u|^{\frac p 2}\right\|_2\nonumber\\
&\le C\left\|u(t)\right\|_\infty \left\|\partial_x |u|^{\frac p 2}\right\|_2 \left\||u|^{\frac p 2}\right\|_2\nonumber\\
&\le C\left\|u(0)\right\|_\infty \left\|\partial_x |u|^{\frac p 2}\right\|_2 \left\||u|^{\frac p 2}\right\|_2.
\end{align}
Combining \eqref{eq2.32}, \eqref{eq2.06}, \eqref{eq2.26}, and using Young's inequality and Gronwall's inequality, we easily get
$\|u(t)\|_p\le Ce^{Ct} $, which together with \eqref{eq2.36} yields \eqref{eq1.29}.

The theorem is proved.
\end{proof}
\begin{rem} \label{rmdifferent}
A slightly different proof for $\beta=2$ is possible and we sketch
it below for the sake of completeness. Multiplying both sides of
\eqref{e_main} by $-\partial_{xx}u$ and integrating by parts, we
have
\begin{align*}
&\frac 1 2\frac d {dt} \int (\partial_x u)^2\, dx \\& = - \int (P_{<1}
\Lambda^{-\alpha} u) \frac 12 \partial_x ((\partial_x u)^2)\, dx -\int
P_{\ge 1} \Lambda^{-\alpha} u \cdot \partial_x u \partial_{xx} u\, dx -
\nu \int (\partial_{xx} u)^2 \,dx \notag \\
& = \frac 12 \int \partial_x P_{<1} \Lambda^{-\alpha} u \cdot
(\partial_x u)^2\, dx - \int P_{\ge 1} \Lambda^{-\alpha }u  \cdot
\partial_x u \partial_{xx} u \, dx- \nu \int (\partial_{xx} u)^2\, dx
\notag \\
& \lesssim \| u(t)\|_{\infty} \| \partial_x u(t) \|_2^2 + \frac 2{\nu} \|
P_{\ge 1} \Lambda^{-\alpha} u (t)\|_{\infty}^2 \| \partial_x u(t)\|_2^2
- \frac{\nu}2 \| \partial_{xx} u(t)\|_2^2 \notag \\
& \lesssim ( \frac 2 {\nu} +1) \big(\|u(t)\|_{\infty}^2 +1\big) \| \partial_x
u(t)\|_2^2 - \frac {\nu}2 \| \partial_{xx} u(t) \|_2^2.
\end{align*}
Since $\|u(t)\|_\infty \le \|u_0 \|_\infty$, a Gronwall in time
argument then yields
\begin{align}
\| \partial_x u(t) \|_2^2 + \nu \int_0^T \| \partial_{xx} u\|_2^2 dt
\lesssim e^{Ct}, \qquad \forall\, t\ge 0. \label{lalala}
\end{align}

Now global wellposedness quickly follows from the continuation
criterion \eqref{Mar6_e9} and \eqref{lalala}, since
\begin{align}
\|\partial_x \Lambda^{-\alpha} u(t) \|_{\infty}& \lesssim \| P_{<1}
\partial_x \Lambda^{-\alpha} u(t) \|_{\infty} + \| P_{\ge 1}
\partial_x \Lambda^{-\alpha} u (t) \|_{\infty} \notag \\
& \lesssim \| u(t) \|_{\infty} + \sum_{N\ge 1} N^{1-\alpha+ \frac
12-2} \| \partial_{xx} u(t) \|_2 \notag \\
& \lesssim \|u_0 \|_\infty + \|\partial_{xx} u(t) \|_2.
                        \label{eq9.50}
\end{align}
\end{rem}

\begin{rem}
The above arguments  can be modified to prove to the global wellposedness of the following 1D model when $\beta=2$:
\begin{equation}
                        \label{eq9.22}
\partial_tu=Hu\partial_xu-\nu\Lambda^{\beta}u,
\end{equation}
where $\nu>0$ is a constant.
This equation has been studied recently in \cite{CCF05}, and later in \cite{D08, LR08}. It is now known that when $\beta\in [1,2)$ the equation is globally wellposed. While in the range $\beta\in [0,1/4)$, evolving from a family of initial data solutions blow up in finite time. In \cite{CCF05}, an additional positivity assumption is imposed on $u_0$. On the other hand, the proof of the global wellposedness in \cite{D08} relies on the non-local maximum principle, which does not work when $\beta=2$ by the same reasoning above.

To deal with this borderline case, we multiply both sides of \eqref{eq9.22} by $u$ and integrate by parts to get
\begin{align*}
\frac 1 2  \frac d {dt}
\int_{\mathbb R} u^2 \,dx
&= \int_{\mathbb R} H u  (\partial_x u) u \,dx+ \nu\int_{\mathbb R} (\partial_{xx}u) u \,dx\\
&= -\frac 1 2\int_{\mathbb R} (\partial_x H u) u^2 \,dx- \nu\int_{\mathbb R} (\partial_{x}u)^2 \,dx\\
&\le \|\partial_x H u(t)\|_2 \|u(t)\|_2\|u(t)\|_\infty- \nu\| (\partial_{x}u(t)\|_2^2\\
&\le \|\partial_xu(t)\|_2 \|u(t)\|_2\|u(0)\|_\infty- \nu\| (\partial_{x}u(t)\|_2^2,
\end{align*}
where in the last inequality, we use the boundedness of the Hilbert transform in $L^2$ and the maximum principle. As before, by Young's inequality and Gronwall's inequality, we get $\|u(t)\|_2\le Ce^{Ct}$, which further implies that $\|Hu(t)\|_2\le Ce^{Ct}$. Therefore, by the classical Sobolev theory, $u$ is globally regular.

Alternatively, we multiply both sides of \eqref{eq9.22} by $-\partial_{xx}u$ and integrate by parts to get
\begin{align*}
&\frac 1 2\frac d {dt} \int (\partial_x u)^2\, dx \\& = - \int (Hu) \frac 12 \partial_x ((\partial_x u)^2)\, dx -
\nu \int (\partial_{xx} u)^2 \,dx \notag \\
& = \frac 12 \int (H\partial_x u)(\partial_x u)^2\, dx - \nu \int (\partial_{xx} u)^2\, dx
\notag \\
& = -\frac 12 \int (H\partial_{xx} u\partial_x u+H\partial_{x} u\partial_{xx}u)u\, dx - \nu \int (\partial_{xx} u)^2\, dx
\notag \\
& \le \| \partial_{xx}u(t)\|_{2} \| \partial_x u (t)\|_2\|u(t)\|
- \nu \| \partial_{xx} u(t)\|_2^2
\end{align*}
Since $\|u(t)\|_\infty \le \|u_0 \|_\infty$, by Young's inequality and Gronwall's inequality, we get
\begin{align*}
\| \partial_x u(t) \|_2^2 + \nu \int_0^T \| \partial_{xx} u\|_2^2 dt
\le Ce^{Ct}, \qquad \forall\, t\ge 0.
\end{align*}
Note that \eqref{eq9.50} still holds when $\alpha=0$, which implies the global regularity of $u$ by the Beale--Kato--Majda criterion.
\end{rem}

\section{Auxiliary lemmas}              \label{sec3}

This section is devoted to several auxiliary lemmas, which will be used in the proofs of Proposition \ref{prop1} and Theorem \ref{thm2} in the following section.

\begin{lem} \label{lem1}
Let $0<\alpha<1$. Assume ${0}<\delta<2$ and
\begin{align*}
g(x)=|x|^{-\delta} \sgn(x) =
\begin{cases}
x^{-\delta}, \quad \text{if $x>0$}, \\
-|x|^{-\delta}, \quad \text{if $x<0$}.
\end{cases}
\end{align*}
Then
\begin{align}
(\Lambda^{-\alpha} g)(x)= C_{\alpha,\delta} |x|^{\alpha-\delta}
\sgn(x), \label{e100}
\end{align}
where $C_{\alpha,\delta}>0$ is a constant depending only on
$(\alpha,\delta)$. Similarly for $0\le \beta<1$, we have
\begin{align}
(\Lambda^\beta g)(x) = C_{\beta,\delta} |x|^{-\delta-\beta} \sgn(x), \label{228_e1}
\end{align}
where $C_{\beta,\delta}>0$ is another constant depending only on $(\beta,\delta)$.
\end{lem}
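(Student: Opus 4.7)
The plan is to exploit two features of $g$: it is odd, and it is positively homogeneous of degree $-\delta$, i.e.\ $g(\lambda x) = \lambda^{-\delta}g(x)$ for every $\lambda>0$. The operators $\Lambda^{-\alpha}$ and $\Lambda^{\beta}$ are translation invariant with even (Fourier-)kernels, so they preserve oddness; and they are scale covariant in the sense $(\Lambda^{s}f)(\lambda x)=\lambda^{-s}\Lambda^{s}[f(\lambda\,\cdot\,)](x)$. Applied to the homogeneous $g$ this gives
\begin{align*}
(\Lambda^{-\alpha}g)(\lambda x)=\lambda^{\alpha-\delta}(\Lambda^{-\alpha}g)(x),\qquad (\Lambda^{\beta}g)(\lambda x)=\lambda^{-\beta-\delta}(\Lambda^{\beta}g)(x).
\end{align*}
A nonzero odd function on $\mathbb{R}\setminus\{0\}$ that is positively homogeneous of degree $\gamma$ must be a scalar multiple of $|x|^{\gamma}\sgn(x)$, which immediately yields \eqref{e100} and \eqref{228_e1} for some real constants $C_{\alpha,\delta}$ and $C_{\beta,\delta}$.

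To make this rigorous and to fix the sign of the constants, I would use the explicit integral representations. For \eqref{e100}, start from the Riesz potential formula $(\Lambda^{-\alpha}f)(x)=c_{\alpha}\int_{\mathbb{R}} f(y)\,|x-y|^{\alpha-1}\,dy$ with $c_{\alpha}>0$; for $x>0$, splitting at $y=0$ and using $g(-y)=-g(y)$ gives
\begin{align*}
(\Lambda^{-\alpha}g)(x)=c_{\alpha}\int_{0}^{\infty}y^{-\delta}\Bigl[\,|x-y|^{\alpha-1}-(x+y)^{\alpha-1}\,\Bigr]dy.
\end{align*}
The substitution $y=xu$ then exposes the factor $x^{\alpha-\delta}$ and identifies $C_{\alpha,\delta}$ with the same integral evaluated at $x=1$. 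A routine convergence check at $u=0,\,1,\,\infty$ (using $0<\alpha<1$ and $0<\delta<2$) shows absolute convergence, and the strict inequality $|1-u|<1+u$ for $u>0$, together with the monotonicity of $t\mapsto t^{\alpha-1}$ (since $1-\alpha>0$), makes the bracket pointwise positive. Hence $C_{\alpha,\delta}>0$.

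For \eqref{228_e1} the case $\beta=0$ is trivial with $C_{0,\delta}=1$. For $0<\beta<1$ the same scaling-and-oddness argument applied to the singular-integral representation \eqref{tp100_1} gives the stated functional form, with $C_{\beta,\delta}$ expressible as a principal-value integral at $x=1$. Rather than analyze this PV integral head-on, to pin down the sign I would invoke the composition identity $\Lambda^{-\beta}\Lambda^{\beta}g=g$ together with \eqref{e100} applied with $\alpha$ replaced by $\beta$ and $\delta$ replaced by $\delta+\beta$ (admissible whenever $\delta+\beta<2$, which covers the range in which \eqref{228_e1} is used in the sequel); this forces the relation $C_{\beta,\delta}\cdot C_{\beta,\delta+\beta}=1$, and since the second factor has already been shown to be positive, so is the first. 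I expect this last sign verification to be the main (minor) obstacle; the scaling and parity portion of the argument is essentially immediate.
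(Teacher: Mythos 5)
Your proposal is correct where it overlaps the paper and genuinely different where it does not. For the functional form and for \eqref{e100} you do essentially what the authors do: they use the odd-kernel representation \eqref{ef_odd} (and its $\Lambda^\beta$ analogue), substitute $y=xu$, and identify the constant with the resulting integral at $x=1$; your homogeneity-plus-parity preamble is that scaling argument stated abstractly, and your positivity check for $C_{\alpha,\delta}$ (bracket pointwise positive since $|1-u|<1+u$ and $t\mapsto t^{\alpha-1}$ is decreasing) is the natural completion of a step the paper leaves implicit. The real divergence is the sign of $C_{\beta,\delta}$ in \eqref{228_e1}. The paper writes $C_{\beta,\delta}$ as a positive multiple of $\int_0^\infty\bigl((1-y^{-\delta})|1-y|^{-1-\beta}+(1+y^{-\delta})(1+y)^{-1-\beta}\bigr)\,dy$ and verifies only convergence; positivity is asserted, not proved, and it is not pointwise obvious (near $y=0$ the integrand behaves like $-2(1+\beta)y^{1-\delta}$, strongly negative when $\delta>1$). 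Your composition trick $\Lambda^{-\beta}\Lambda^{\beta}g=g$, which yields $C_{\beta,\delta}=1/C'$ with $C'>0$ the Riesz constant of \eqref{e100} at parameters $(\beta,\delta+\beta)$, is absent from the paper and supplies the missing sign argument. Moreover your restriction $\delta+\beta<2$ is not a blemish but essentially forced: the odd analogue of the paper's \eqref{e204} is
\begin{equation*}
\mathcal{F}\bigl(|x|^{-z}\sgn(x)\bigr)(\xi)=-2i\,\sgn(\xi)\,|\xi|^{z-1}\,\Gamma(1-z)\cos\bigl(\tfrac{\pi z}{2}\bigr),\qquad 0<z<2,
\end{equation*}
with the value at $z=1$ understood as the limit $\pi/2$, so that $C_{\beta,\delta}=\Gamma(1-\delta)\cos(\tfrac{\pi\delta}{2})\big/\bigl(\Gamma(1-\delta-\beta)\cos(\tfrac{\pi(\delta+\beta)}{2})\bigr)$. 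Since $\Gamma(1-z)\cos(\tfrac{\pi z}{2})$ is positive on $(0,2)$, has a simple pole at $z=2$, and is negative on $(2,3)$, the constant is positive precisely when $\delta+\beta<2$; continuing past $\delta+\beta=2$ (or estimating the paper's integral directly, where the $y\approx 0$ portion dominates) shows the constant degenerates at $\delta+\beta=2$ and turns negative beyond it. So the lemma's positivity claim in the full range $0<\delta<2$, $0\le\beta<1$ actually fails when $\delta+\beta>2$; your restricted version is the correct statement, and as you observe it covers the only place the sign is used (Theorem \ref{thm2}, where $\delta<2(1-\beta)$ gives $\delta+\beta<2$).

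Two points to tighten. First, $\Lambda^{-\beta}\Lambda^{\beta}g=g$ is not automatic for this non-decaying, locally singular $g$; justify it on the Fourier side by noting that for $0<\delta<2$ and $\delta+\beta<2$ all transforms involved ($\sgn(\xi)|\xi|^{\delta-1}$ and $\sgn(\xi)|\xi|^{\delta+\beta-1}$) are locally integrable tempered functions and all homogeneous profiles in $x$ have degree in $(-2,0)$, so no finite-part regularizations or distributions supported at the origin arise and the multiplier composition is legitimate. Second, for $1\le\delta<2$ the representation \eqref{tp100_1} applied to $g$ is not absolutely convergent near $y=0$; one should first symmetrize $y\mapsto -y$, as the paper's odd-function formula does from the start, after which the $y\approx 0$ contribution is $O(y^{1-\delta})$ and harmless. (Also, your relation $C_{\beta,\delta}\cdot C_{\beta,\delta+\beta}=1$ conflates two differently defined constants; your parenthetical makes the meaning clear, but distinct symbols would be safer.) With these repairs your proof is complete in the range $\delta+\beta<2$, which is all the paper needs.
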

\begin{proof}[Proof of Lemma \ref{lem1}]
We first note that for any odd function $f=f(y)$,
\begin{align}
(\Lambda^{-\alpha} f)(x) = C_{\alpha} \int_0^\infty f(y) \Bigl(
\frac 1 {|x-y|^{1-\alpha}} - \frac 1 {|x+y|^{1-\alpha}} \Bigr) \,dy, \label{ef_odd}
\end{align}
where $C_{\alpha}>0$ is a constant depending only on $\alpha$.

By a simple scaling argument, we then have for $x>0$,
\begin{align*}
(\Lambda^{-\alpha} g)(x)= C_{\alpha,\delta}^{\prime}
x^{\alpha-\delta} \int_0^{\infty} y^{-\delta} \Bigl( \frac 1
{|1-y|^{1-\alpha}} - \frac 1 {(1+y)^{1-\alpha}} \Bigr) \,dy,
\end{align*}
where $C^{\prime}_{\alpha,\delta}>0$ is a constant. It is easy to
check that the integral in the last equality converges for
$0<\alpha<1$, ${0}<\delta<2$. The identity \eqref{e100} follows
easily.

Next we prove \eqref{228_e1}. Without loss of generality, we can assume $0<\beta<1$. Note that for any odd function
$f$, we have
\begin{align*}
(\Lambda^{\beta} f)(x) =C_{\beta} \int_0^{\infty}
\Bigl( \frac{f(x)-f(y)}{|x-y|^{1+\beta}} + \frac{f(x)+f(y)}
{|x+y|^{1+\beta}} \Bigr) \,dy.
\end{align*}

Again by a scaling argument, we have for $x>0$,
\begin{align*}
(\Lambda^{\beta} g)(x) = C_{\beta,\delta} x^{-\delta-\beta}
\int_0^{\infty} \Bigl( \frac{1-y^{-\delta}}{|1-y|^{1+\beta}} +
\frac{1+y^{-\delta}} {|1+y|^{1+\beta}} \Bigr) \,dy.
\end{align*}
Since $0<\delta<2$ and $0<\beta<1$, it is not difficult to check the last
integral converges. The lemma is proved.
\end{proof}

\begin{lem} \label{lem1a}
Let $0<\alpha<\alpha_1<1$. Define $g$ by
\begin{align*}
g(x)= |x|^{-\alpha_1} \sgn(x) \chi_{|x| \ge 1}
=\begin{cases}
|x|^{-\alpha_1}, \quad x\ge 1,\\
-|x|^{-\alpha_1}, \quad x<-1, \\
0, \quad \text{otherwise}.
\end{cases}
\end{align*}
Then
\begin{align*}
\| \Lambda^{-\alpha} g \|_{L_x^\infty(\mathbb R)} \le C_{\alpha,\alpha_1},
\end{align*}
where $C_{\alpha,\alpha_1}$ is a constant depending only on $(\alpha,\alpha_1)$.
\end{lem}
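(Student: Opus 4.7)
The plan is to exploit the oddness of $g$ together with the representation formula \eqref{ef_odd} from the proof of Lemma \ref{lem1}, and then reduce the boundedness to four elementary one-variable integrals by splitting the integration range according to the size of $x$ and $y$. Since $\Lambda^{-\alpha} g$ is odd, it suffices to show a uniform bound for $x \ge 0$. Using \eqref{ef_odd} we may write
\begin{align*}
(\Lambda^{-\alpha} g)(x) = C_\alpha \int_1^\infty y^{-\alpha_1}\Bigl(\frac{1}{|x-y|^{1-\alpha}} - \frac{1}{(x+y)^{1-\alpha}}\Bigr)\,dy,
\end{align*}
so the task is purely to bound this integral uniformly in $x\ge 0$.

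I would treat two regimes, $0\le x\le 2$ and $x\ge 2$. For $0\le x \le 2$, on the bounded piece $y\in [1,4]$ the cancellation is not needed: one just controls the two terms separately using that $|x-y|^{-(1-\alpha)}$ is locally integrable (because $1-\alpha<1$), which gives an absolute constant bound. On the tail $y\ge 4$ one applies the mean value theorem to $f(s)=s^{-(1-\alpha)}$, writing
\begin{align*}
\frac{1}{(y-x)^{1-\alpha}} - \frac{1}{(y+x)^{1-\alpha}} = (1-\alpha)\cdot 2x\cdot \xi^{-(2-\alpha)},\qquad \xi\in(y-x,y+x),
\end{align*}
which yields an integrand of size $y^{-\alpha_1-2+\alpha}$ (noting $y-x\ge y/2$), and this is integrable at infinity.

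For $x\ge 2$ I would further split the $y$-integral into the three regions $[1,x/2]$, $[x/2,2x]$, and $[2x,\infty)$. On the outer regions $y\le x/2$ and $y\ge 2x$ one has $|x-y|\gtrsim \max(x,y)$, so the two-term cancellation (via the same mean value bound $\lesssim x(y-x)^{-(2-\alpha)}$ on the upper tail, and a direct $(x/2)^{-(1-\alpha)}$ bound on the lower region) combined with $\int_1^{x/2} y^{-\alpha_1}\,dy\lesssim x^{1-\alpha_1}$ and $\int_{2x}^\infty y^{-\alpha_1-2+\alpha}\,dy\lesssim x^{-\alpha_1-1+\alpha}$ each gives a contribution of order $x^{\alpha-\alpha_1}$. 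On the middle region $[x/2,2x]$, one discards the cancellation and estimates the two terms separately: $y^{-\alpha_1}\sim x^{-\alpha_1}$ factors out, and $\int_{x/2}^{2x}|x-y|^{-(1-\alpha)}\,dy\lesssim x^\alpha$, again producing $x^{\alpha-\alpha_1}$.

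Putting these pieces together yields $|(\Lambda^{-\alpha}g)(x)|\le C_{\alpha,\alpha_1}(1 + x^{\alpha-\alpha_1})$ for $x\ge 0$, and the hypothesis $\alpha<\alpha_1$ is used precisely here to ensure that $x^{\alpha-\alpha_1}$ is bounded as $x\to\infty$. The main obstacle is really bookkeeping: one must verify that in each subregion either (a) the raw $L^1$-integrability of $|x-y|^{-(1-\alpha)}$ handles the singularity at $y=x$, or (b) the cancellation between the two kernels gives enough extra decay in $y$ to beat the slow decay $y^{-\alpha_1}$ of $g$ at infinity. Once the decomposition is fixed, each subintegral is an elementary Beta-type integral.
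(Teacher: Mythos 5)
Your proof is correct, but it takes a genuinely different route from the paper. The paper's proof is a two-line frequency-side argument: it splits $\Lambda^{-\alpha}g$ into high and low Littlewood--Paley pieces, bounds the high frequencies by $\|g\|_\infty$, and applies the Bernstein inequality to the low frequencies to get $\|\Lambda^{-\alpha}P_N g\|_\infty \lesssim N^{-\alpha}N^{1/p}\|g\|_{p}$ with $p$ slightly larger than $1/\alpha_1$ (so that $g\in L^p$), the dyadic sum over $N\le 1$ converging precisely because $\alpha<\alpha_1$. Your argument is instead kernel-side: the odd representation \eqref{ef_odd} plus a region-by-region real-variable estimate. All of your subregion bounds check out (in particular the Beta-type estimates giving $x^{\alpha-\alpha_1}$ in each piece for $x\ge 2$, and the local integrability of $|x-y|^{-(1-\alpha)}$ near the diagonal since $1-\alpha<1$), so the proof is complete. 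Two observations on the comparison. First, your approach actually proves more than the lemma asserts: the bound $|(\Lambda^{-\alpha}g)(x)|\lesssim 1+x^{\alpha-\alpha_1}$ gives pointwise decay at infinity, not just boundedness. Second, the cancellation you extract via the mean value theorem is a luxury rather than a necessity in most regions: since $\alpha<\alpha_1$, the tail integrand already satisfies $y^{-\alpha_1}|x-y|^{-(1-\alpha)}\lesssim y^{-\alpha_1-1+\alpha}$ with $\alpha_1+1-\alpha>1$, so each kernel term is absolutely integrable on its own and even the oddness of $g$ is not essential to your estimates (it merely lets you fold the $y<0$ contribution into the second kernel term). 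This is the trade-off: the paper's Littlewood--Paley proof is shorter and applies verbatim to any function in $L^\infty\cap L^{1/\alpha_1+}$ with no structural hypotheses, while yours is elementary, self-contained, and quantitatively sharper for this specific $g$.
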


\begin{proof}[Proof of Lemma \ref{lem1a}]
By using the Littlewood--Paley projectors and noting that $g\in L_x^\infty \cap L_x^{\frac 1 {\alpha_1} +}$, we compute
\begin{align*}
\| \Lambda^{-\alpha} g\|_\infty &\lesssim \| g \|_\infty + \sum_{\text{N dyadic}: N\le 1} \| \Lambda^{-\alpha} P_{N} g \|_{\infty} \\
& \lesssim \|g\|_\infty+ \sum_{N\le 1} N^{-\alpha} N^{\alpha_1-} \| g\|_{\frac 1{\alpha_1}+}\notag\\
& \lesssim \|g\|_\infty+\|g\|_{\frac 1 {\alpha_1}+} \notag\\
& \le C_{\alpha,\alpha_1},
\end{align*}
where in the second inequality we have used the Bernstein inequality. In the third inequality we used
the fact that $\alpha<\alpha_1$ to make the summation over dyadic $N<1$ converge.
\end{proof}

\begin{lem} \label{lem2}
Suppose $0<\alpha<1$. Assume $0<\theta<1-\alpha$. Then for $\lambda
\in \mathbb R$,  $g=g(x)=|x|^{i\lambda -\theta}
=e^{(i\lambda-\theta) \log |x|}$, we have
\begin{align}
(\Lambda^{\alpha} g)(x) = {2^\alpha} \frac{
\Gamma(\frac{1-\theta+i\lambda}2) }{\Gamma ( \frac{\theta-i\lambda}2
) } \cdot \frac{\Gamma(\frac{\theta+\alpha-i\lambda}2)} {\Gamma(
\frac{1-\theta-\alpha+i\lambda}2)} \cdot
|x|^{-(\theta+\alpha-i\lambda)},\quad 0\ne x \in \mathbb R.
\label{e200}
\end{align}
Here 
$\Gamma = \Gamma(z)$ is the usual Gamma function
defined by
\begin{align}
\Gamma(z) = \int_0^{\infty} e^{-s} s^{z-1} ds, \quad \re(z)>0.
\label{e201}
\end{align}

\end{lem}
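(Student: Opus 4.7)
The plan is to compute $\Lambda^{\alpha} g$ via the Fourier transform. First observe that by the homogeneity relation $g(rx)=r^{i\lambda-\theta}g(x)$ for $r>0$, together with evenness of $g$ and the fact that $\Lambda^\alpha$ commutes with dilations up to a factor of $r^\alpha$, the output must take the form
\[
(\Lambda^\alpha g)(x) = C_{\alpha,\theta,\lambda}\,|x|^{-(\theta+\alpha-i\lambda)}
\]
for some constant $C_{\alpha,\theta,\lambda}\in\mathbb{C}$. The assumption $0<\theta<1-\alpha$ guarantees that $\mathrm{Re}(i\lambda-\theta)\in(-1,0)$ and $\mathrm{Re}(-(\theta+\alpha-i\lambda))\in(-1,0)$, so both $g$ and the purported right-hand side are locally integrable tempered distributions with convergent Fourier representations.

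Next I invoke the classical Fourier formula for even homogeneous distributions on $\mathbb{R}$,
\[
\mathcal{F}\bigl(|x|^s\bigr)(\xi)=\sqrt{\pi}\,2^{s+1}\,\frac{\Gamma\!\left(\tfrac{s+1}{2}\right)}{\Gamma\!\left(-\tfrac{s}{2}\right)}\,|\xi|^{-s-1},\qquad -1<\mathrm{Re}(s)<0,
\]
which follows from the absolutely convergent identity $2\int_0^\infty x^s\cos(x)\,dx=-2\Gamma(s+1)\sin(\pi s/2)$ together with the Legendre duplication formula and Euler's reflection formula. Applying this once with $s=i\lambda-\theta$ gives an explicit expression for $\widehat{g}(\xi)$, and then $\widehat{\Lambda^\alpha g}(\xi)=|\xi|^\alpha\widehat{g}(\xi)$ is a known constant times $|\xi|^{\theta+\alpha-i\lambda-1}$. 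Applying the same formula to $|x|^{-(\theta+\alpha-i\lambda)}$, i.e.\ with exponent $t=-\theta-\alpha+i\lambda$, yields its Fourier transform, which is another explicit constant times the very same power $|\xi|^{\theta+\alpha-i\lambda-1}$.

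Finally I read off $C_{\alpha,\theta,\lambda}$ by matching the two Fourier-side prefactors, using injectivity of the Fourier transform on tempered distributions. The common factor $\sqrt{\pi}$ cancels, the two powers of $2$ combine into $2^\alpha$, and the four gamma functions assemble exactly into the product appearing on the right-hand side of \eqref{e200}. The only real work is careful bookkeeping of the gamma factors; there is no genuine obstacle, because the hypothesis $0<\theta<1-\alpha$ keeps both exponents $s=i\lambda-\theta$ and $t=-\theta-\alpha+i\lambda$ strictly inside the strip $-1<\mathrm{Re}(\cdot)<0$ on which the Fourier formula is valid without meromorphic continuation.
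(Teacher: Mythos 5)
Your proof is correct and follows essentially the same route as the paper: both hinge on the classical formula $\mathcal F(|x|^{-z})(\xi)=\sqrt{\pi}\,2^{1-z}\,\Gamma(\tfrac{1-z}{2})\Gamma(\tfrac{z}{2})^{-1}|\xi|^{z-1}$ applied twice (once to $g$, once after multiplying by $|\xi|^{\alpha}$), with $0<\theta<1-\alpha$ keeping both exponents in the admissible strip --- the only difference being that the paper derives this formula from the subordination identity $|x|^{-z}=\Gamma(z/2)^{-1}\int_0^{\infty}e^{-s|x|^2}s^{z/2-1}\,ds$ and the Gaussian Fourier transform, whereas you derive it from the cosine integral plus the reflection and duplication formulas. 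One minor slip: for $-1<\re(s)<0$ the integral $\int_0^{\infty}x^{s}\cos x\,dx$ is only conditionally (improperly) convergent at infinity, not absolutely convergent, though the identity you quote is still valid in that improper sense and the argument goes through.
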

\begin{proof}[Proof of Lemma \ref{lem2}]
By using \eqref{e201} and a simple change of variable argument, we
have for $0\ne x \in \mathbb R$, $z\in \mathbb C$ with $\re(z)>0$,
\begin{align}
|x|^{-z} = \frac 1 {\Gamma(\frac z 2)} \int_0^{\infty} e^{-s
|x|^2} s^{\frac 12 z -1} ds. \label{e202}
\end{align}
By using \eqref{e202} and the
explicit form of the Fourier transform of Gaussian functions, it is
not difficult to check that
\begin{align}
\int_{-\infty}^{\infty} |x|^{-z} e^{-ix\cdot \xi} \,dx ={\sqrt\pi
2^{1-z}} \cdot \frac{\Gamma(\frac{1-z}2)}{\Gamma(\frac z 2)}
|\xi|^{z-1}, \quad \forall\, 0\ne \xi \in \mathbb R \label{e204}
\end{align}
{provided that $\re (z)\in (0,1)$.}

Take $z=\theta-i\lambda$, then clearly
\begin{align*}
\widehat{|x|^{-(\theta-i\lambda)}}(\xi) = {\sqrt\pi
2^{1-\theta+i\lambda}} \frac{\Gamma ( \frac{1-\theta+i\lambda}2)}
{\Gamma( \frac{\theta-i\lambda}2)} \cdot
|\xi|^{-(1-\theta+i\lambda)}.
\end{align*}
Consequently
\begin{align*}
|\xi|^{\alpha} \widehat{|x|^{-(\theta-i\lambda)}}(\xi) ={\sqrt\pi
2^{1-\theta+i\lambda}} \frac{\Gamma( \frac{1-\theta+i\lambda}2 )}
{\Gamma (\frac{\theta-i\lambda}2)} \cdot
|\xi|^{-(1-\theta-\alpha+i\lambda)}.
\end{align*}

Now note that $0<\theta+\alpha<1$. Using \eqref{e204} again with $z=1-\theta-\alpha+i\lambda$,
we get

\begin{align*}
\mathcal F^{-1} \Bigl( |\xi|^{\alpha}
\widehat{|x|^{i\lambda-\theta}}(\xi) \Bigr) (x) ={2^\alpha} \frac{
\Gamma(\frac{1-\theta+i\lambda}2) }{\Gamma ( \frac{\theta-i\lambda}2
) } \cdot \frac{\Gamma(\frac{\theta+\alpha-i\lambda}2)} {\Gamma(
\frac{1-\theta-\alpha+i\lambda}2)} \cdot
|x|^{-(\theta+\alpha-i\lambda)}.
\end{align*}
This establishes \eqref{e200}.
\end{proof}

The following lemma is crucial for the proof of Proposition \ref{prop1}.

\begin{lem} \label{lem3}
Suppose $0<\alpha<1$ and $0<\theta<1-\alpha$. Then for $\lambda
\in \mathbb R$,  $g=g(x)=|x|^{i\lambda -\theta}$, we have
\begin{align}
-(\partial_x \Lambda^{\alpha} g)(x) = {2^{\alpha+1}}
F_{\alpha,\theta}(\lambda) \; \cdot
x^{-(\theta+\alpha+1-i\lambda)},\quad x>0 \label{e400}
\end{align}
where
\begin{align}
F_{\alpha,\theta}(\lambda)=
\frac{
\Gamma(\frac{1-\theta+i\lambda}2) }{\Gamma ( \frac{\theta-i\lambda}2
) } \cdot \frac{\Gamma(1+\frac{\theta+\alpha-i\lambda}2)} {\Gamma(
\frac{1-\theta-\alpha+i\lambda}2)}, \label{e402}
\end{align}
and also has the sharp bound:
\begin{align}
\frac 1 {C_{\alpha,\theta}} (1+|\lambda|^{\alpha}) \le
 \re\Bigl(F_{\alpha,\theta}(\lambda)\Bigr)
  \le C_{\alpha,\theta} (1+|\lambda|^{\alpha}),
\quad \forall\, \lambda \in \mathbb R. \label{e404}
\end{align}
Here $C_{\alpha,\theta}>0$ {is a constant} depending only on
$(\alpha,\theta)$.
\end{lem}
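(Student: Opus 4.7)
The identity \eqref{e400} should follow immediately from Lemma \ref{lem2} by direct differentiation in $x$ on the half-line $x>0$. The right-hand side of \eqref{e200} differentiates to $-(\theta+\alpha-i\lambda)\,x^{-(\theta+\alpha+1-i\lambda)}$ times the Gamma quotient, and invoking the functional equation $\Gamma(1+z)=z\Gamma(z)$ with $z=(\theta+\alpha-i\lambda)/2$ converts $(\theta+\alpha-i\lambda)\,\Gamma((\theta+\alpha-i\lambda)/2)$ into $2\,\Gamma(1+(\theta+\alpha-i\lambda)/2)$; the extra factor of $2$ bumps $2^{\alpha}$ up to $2^{\alpha+1}$ and reshuffles the Gamma quotient into exactly \eqref{e402}.

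For the sharp bound \eqref{e404} my plan is first to simplify $F_{\alpha,\theta}$ to a cleaner form. Applying Euler reflection $\Gamma(z)\Gamma(1-z)=\pi/\sin(\pi z)$ to both denominator factors (noting $\sin(\pi(1-\theta-\alpha+i\lambda)/2)=\cos(\pi(\theta+\alpha-i\lambda)/2)$) and then Legendre duplication $\Gamma(z)\Gamma(z+1/2)=2^{1-2z}\sqrt{\pi}\,\Gamma(2z)$ to pair up the resulting Gamma factors should yield, after a short calculation,
\[
F_{\alpha,\theta}(\lambda)=\frac{2^{-\alpha-1}\,\Gamma(1+\theta+\alpha-i\lambda)}{\Gamma(\theta-i\lambda)}\cdot\frac{\cos(\pi(\theta+\alpha-i\lambda)/2)}{\cos(\pi(\theta-i\lambda)/2)}.
\]
The cosine quotient tends to $e^{i\pi\alpha/2}$ with exponentially small error as $\lambda\to+\infty$ (since $\cos(a-ib)\sim\tfrac12 e^b e^{ia}$ for $b\to+\infty$). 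For the Gamma quotient I would apply the complex Stirling expansion $\log\Gamma(z)=(z-\tfrac12)\log z-z+\tfrac12\log(2\pi)+O(1/|z|)$ together with $\log(a-i\lambda)=\log\lambda-i\pi/2+ia/\lambda+O(1/\lambda^2)$, carefully retaining $O(1/\lambda)$ corrections, to obtain $\Gamma(1+\theta+\alpha-i\lambda)/\Gamma(\theta-i\lambda)=-i\lambda^{1+\alpha}e^{-i\alpha\pi/2}\bigl(1+i(1+\alpha)(\theta+\alpha/2)/\lambda+O(1/\lambda^2)\bigr)$. Multiplying the three pieces, the $e^{\pm i\alpha\pi/2}$ factors cancel and one finds
\[
F_{\alpha,\theta}(\lambda)=-i\,2^{-\alpha-1}\lambda^{1+\alpha}+2^{-\alpha-1}(1+\alpha)(\theta+\alpha/2)\lambda^{\alpha}+O(\lambda^{\alpha-1}),\qquad \lambda\to+\infty.
\]
In particular $\re F_{\alpha,\theta}(\lambda)=2^{-\alpha-1}(1+\alpha)(\theta+\alpha/2)\lambda^{\alpha}+O(\lambda^{\alpha-1})$, which has positive leading coefficient. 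The case $\lambda<0$ follows from the symmetry $F_{\alpha,\theta}(-\lambda)=\overline{F_{\alpha,\theta}(\lambda)}$, so $\re F_{\alpha,\theta}$ is even and satisfies the bound in both asymptotic regimes.

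To upgrade this to the uniform two-sided bound \eqref{e404}, combine the asymptotic with continuity: at $\lambda=0$, the constraint $0<\theta<1-\alpha$ puts all four Gamma arguments in the open right half-plane, so $F_{\alpha,\theta}(0)$ is a strictly positive real number, and the upper bound together with the lower bound on any compact interval follows from continuity of $F_{\alpha,\theta}$ in $\lambda$. The step I expect to demand the most care is ruling out an intermediate zero of $\re F_{\alpha,\theta}$: the leading asymptotic has argument tending to $-\pi/2$, so $\re F_{\alpha,\theta}$ is already small in absolute size for large $|\lambda|$, and a priori nothing prevents a sign change at some finite $\lambda$. The cleanest remedy is to use the simplified form to show that $\arg F_{\alpha,\theta}(\lambda)$ stays strictly inside the open sector $(-\pi/2,\pi/2)$ for every $\lambda\in\mathbb{R}$, by separately bounding the argument contribution of the cosine quotient (which lies in $(-\pi\theta/2,\pi(\theta+\alpha)/2)\subset(-\pi/2,\pi/2)$ by an elementary computation) and that of the Gamma quotient (via a direct complex-analytic or integral-representation argument). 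Once the argument of $F_{\alpha,\theta}$ is confined to the right half-plane, strict positivity of $\re F_{\alpha,\theta}$ throughout $\mathbb{R}$ is automatic, and combining with the asymptotic completes the sharp two-sided bound \eqref{e404}.
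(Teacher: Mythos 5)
Your derivation of \eqref{e400} is exactly the paper's: differentiate the identity of Lemma \ref{lem2} for $x>0$ and absorb the factor $\theta+\alpha-i\lambda$ via $z\Gamma(z)=\Gamma(1+z)$, which produces the extra $2$ in $2^{\alpha+1}$. Your large-$|\lambda|$ analysis is also sound: the reflection/duplication simplification
\begin{equation*}
F_{\alpha,\theta}(\lambda)=2^{-\alpha-1}\,\frac{\Gamma(1+\theta+\alpha-i\lambda)}{\Gamma(\theta-i\lambda)}\cdot\frac{\cos(\pi(\theta+\alpha-i\lambda)/2)}{\cos(\pi(\theta-i\lambda)/2)}
\end{equation*}
checks out, the cosine quotient is $e^{i\pi\alpha/2}$ up to exponentially small errors, the Gamma-ratio expansion gives $\re F_{\alpha,\theta}(\lambda)=2^{-\alpha-1}(1+\alpha)(\theta+\alpha/2)\lambda^{\alpha}+O(\lambda^{\alpha-1})$ with positive leading coefficient (structurally the same Stirling computation as the paper's; your coefficient differs from the paper's $C_1$ by $\theta^2/4$, which is immaterial since only positivity is used), and $F(-\lambda)=\overline{F(\lambda)}$ together with $F(0)>0$ handles $\lambda=0$ and the symmetry. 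So \eqref{e404} is established for $|\lambda|$ large and on any fixed compact set \emph{provided} $\re F$ has no zero — which is the step you flagged, and there your proposed fix cannot work as stated.

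You want $\arg F_{\alpha,\theta}(\lambda)\in(-\pi/2,\pi/2)$ by bounding the arguments of the two factors \emph{separately}. But for $\lambda>0$ the argument of the Gamma quotient tends to $-\pi(1+\alpha)/2$, strictly below $-\pi/2$; it is only the cosine quotient's argument, which rises from $0$ at $\lambda=0$ toward $+\pi\alpha/2$ as $\lambda\to\infty$, that pulls the sum back up to $-\pi/2+O(1/\lambda)$. Consequently, any bounds proved factor-by-factor combine to a lower range of at best $-\pi(1+\alpha)/2+0<-\pi/2$ (your stated range $(-\pi\theta/2,\pi(\theta+\alpha)/2)$ for the cosine quotient is correct but does not help here), and confining $\arg F$ to the open right half-plane requires a \emph{coupled} quantitative estimate — showing at every finite $\lambda$ that the Gamma quotient's argument exceeds $-\pi(1+\alpha)/2$ by at least as much as the cosine quotient's argument falls short of $\pi\alpha/2$ — which is the original difficulty in disguise. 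The paper closes this hole by a different mechanism: using the hypersingular integral representation of $\partial_x\Lambda^{\alpha}$ applied to $|x|^{i\lambda-\theta}$, it writes $F_{\alpha,\theta}(\lambda)=C(\theta-i\lambda)G(\lambda)$ with $G$ an explicit integral over $(0,1)$, expands $(1\mp y)^{-1-\alpha}$ by the binomial series, and integrates term by term; the resulting series for $\re F_{\alpha,\theta}(\lambda)$ is term-by-term nondecreasing in $\lambda^2$ (using $0<\theta<1-\alpha$), yielding the global bound $\re F(\lambda)\ge \re F(0)>0$ for all $\lambda$. Some such explicit positivity/monotonicity argument is what your sketch still needs; once it is supplied, your asymptotics upgrade the uniform lower bound to the sharp two-sided bound \eqref{e404}.
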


\begin{proof}[Proof of Lemma \ref{lem3}]
Throughout this proof we will denote by the letter $C$ any constant which depends only
on $(\alpha,\theta)$ but may vary from line to line. Note that \eqref{e400} is a simple
consequence of \eqref{e200} and the fact that $z\Gamma(z)=\Gamma(z+1)$. We first establish the weaker
bound
\begin{align}
\re\Bigl( F_{\alpha,\theta}(\lambda) \Bigr) \ge  \re\Bigl( F_{\alpha,\theta}(0) \Bigr),\qquad \forall\, \lambda \in \mathbb R.
\label{e400_weaker}
\end{align}

To show this we first differentiate g and write
\begin{align*}
-\partial_x g = (\theta-i \lambda) |x|^{i\lambda -\theta-1} \sgn(x).
\end{align*}
Then using the fractional representation of $\Lambda^{\alpha}$ and
a simple scaling argument, we have for any $x>0$,
\begin{align*}
- (\Lambda^\alpha \partial_x g )(x) = C \cdot (\theta-i\lambda) x^{i\lambda -\theta-\alpha-1}
\int_0^{\infty}
\Bigl( \frac{1- y^{i\lambda -\theta-1} } {|1-y|^{1+\alpha}}
+ \frac {1 + y^{i\lambda-\theta-1}} {|1+y|^{1+\alpha}} \Bigr) \,dy.
\end{align*}
Therefore
\begin{align*}
F_{\alpha,\theta}(\lambda) & = C \cdot(\theta-i\lambda) \cdot \int_0^1\Bigl( (1-y)^{-1-\alpha}
(1+y^{\alpha-1} - y^{i\lambda-\theta-1} - y^{-i\lambda+\theta+\alpha}) \notag \\
& \qquad + (1+y)^{-1-\alpha}
(1+y^{\alpha-1} + y^{i\lambda-\theta-1} +y^{-i\lambda+\theta+\alpha})\Bigr)\,dy \notag \\
&=: C \cdot (\theta-i\lambda) G(\lambda).
\end{align*}
By using the binomial expansion and a simple
computation, we have
\begin{align*}
&\;G(\lambda) \notag \\
 & = \sum_{ \substack{k\ge 0,\\ \text{$k$ is even}}} C_{k,\alpha}\cdot
\Bigl( \frac 1 {k+1} + \frac 1 {k+\alpha} - \frac{k+\alpha+1}{k+1}
\bigl( \frac{k+1-\theta} {(k+1-\theta)^2 + \lambda^2} + \frac{k+\theta+\alpha+2}{(k+\theta+\alpha+2)^2+\lambda^2} \bigr)
\Bigr)
\notag \\
& \qquad + i \lambda \sum_{\substack{k\ge 0,\\ \text{$k$ is even}}} C_{k+1,\alpha}
\cdot \bigl( \frac 1 {(k+1-\theta)^2+\lambda^2} - \frac 1 {(k+\theta+\alpha+2)^2 + \lambda^2} \bigr),
\end{align*}
where
$$
C_{k,\alpha}=(-1)^k \binom{-\alpha-1}{k} =\frac{\Gamma(k+\alpha+1)} {\Gamma(k+1) \Gamma(\alpha+1) } =
\frac{(\alpha+1) \cdots (\alpha+k)}{k!}
$$
Using the asymptotics
\begin{align*}
\Bigl| \binom{-\alpha-1} {k} \Bigr| \sim const \cdot k^{\alpha}, \quad k\gtrsim 1,
\end{align*}
it is not difficult to check the series representation of $G(\lambda)$ converges.

Clearly then
\begin{align*}
&\;\re\Bigl( F_{\alpha,\theta}(\lambda) \Bigr) \notag \\
& = \theta \sum_{ \substack{k\ge 0,\\ \text{$k$ is even}}} C_{k,\alpha}
\Bigl( \frac 1 {k+1} + \frac 1 {k+\alpha} - \frac{k+\alpha+1}{k+1}
\bigl( \frac{k+1-\theta} {(k+1-\theta)^2 + \lambda^2} + \frac{k+\theta+\alpha+2}{(k+\theta+\alpha+2)^2+\lambda^2} \bigr)
\Bigr)
\notag \\
& \qquad + \lambda^2 \sum_{ \substack{k\ge 0, \\\text{$k$ is even}}} C_{k+1,\alpha}
\cdot \bigl( \frac 1 {(k+1-\theta)^2+\lambda^2} - \frac 1 {(k+\theta+\alpha+2)^2 + \lambda^2} \bigr),
\end{align*}
Owing to our assumption $0<\theta<1-\alpha$, $0<\alpha<1$, it is easy to check directly from the above
expression that \eqref{e400_weaker} holds. Now by \eqref{e402} and \eqref{e400_weaker}, we obtain
\begin{align*}
\re( F(\lambda)) \ge C>0, \quad \forall\, \lambda \in \mathbb R.
\end{align*}

It remains to prove the bound \eqref{e404} for $\lambda$ sufficiently large. For this we need to use the
Stirling's formula for the Gamma function. It states that for any complex $z$ with $|arg(z)| < \pi -\epsilon$
(here $arg(z)$ takes values in $[-\pi,\pi)$) and $|z|\gg 1$, we have
\begin{align*}
\ln \Gamma (z) = (z-\frac 1 2 ) \ln z -z + \frac 12 \ln(2\pi) + \frac 1 {12 z} + O(|z|^{-2}).
\end{align*}

Using \eqref{e402} and a tedious computation, we arrive at
\begin{align*}
\ln F(\lambda) = O(\lambda^{-2}) + (1+\alpha) \ln ( \frac 12 \lambda) - i \frac {\pi}2
+\frac {i}{4\lambda} (\theta^2 + 4\theta(1+\alpha) + 2\alpha(1+\alpha)),
\end{align*}
where $O(\lambda^{-2})$ denotes the remainder term (complex-valued)
whose absolute value is less than $\lambda^{-2}$. It follows easily
that

\begin{align*}
F(\lambda) = ( \frac 1 2 \lambda)^{1+\alpha} \Bigl( -i \cos ( \frac {C_1}{\lambda}) + \sin(\frac{C_1} {\lambda})
\Bigr) + O(\lambda^{-1+\alpha}),
\end{align*}
where $C_1>0$ is a constant depending only on $(\alpha,\theta)$. Clearly
\begin{align*}
\re(F(\lambda)) = C_2 \lambda^{\alpha} + O(\lambda^{-1+\alpha}),\quad \lambda\gg 1.
\end{align*}
where $C_2>0$ is another constant. The sharp bound \eqref{e404} follows.
\end{proof}

\section{Finite time singularities}                             \label{sec4}

In this section, we complete the proofs of Proposition \ref{prop1}
and Theorem \ref{thm2}.  The proof of Proposition \ref{prop1} is
inspired by an argument in \cite{CCF05} by using Mellin transforms
and the corresponding Parseval identity. See also \cite{CCF06,
dongli2, LR08, LR09}.
\begin{proof}[Proof of Proposition \ref{prop1}]
Since by assumption $u$ is odd, using Lemma \ref{lem1}, we get
\begin{align*}
\int_0^{\infty} \frac{ \Lambda^{-\alpha} ( \Lambda^{-\alpha} u \partial_x u ) } {x^{\delta}} \,dx
& = \frac 12 \int_{\infty}^\infty \Lambda^{-\alpha} ( \Lambda^{-\alpha} u \partial_x u )
|x|^{-\delta} \sgn (x) \,dx \\
& = \frac 12 \int_{-\infty}^{\infty} \Lambda^{-\alpha} u \partial_x u \Lambda^{-\alpha}
(|x|^{-\delta} \sgn(x)) \,dx \notag \\
& = C_{\alpha,\delta} \int_0^\infty \Lambda^{-\alpha} u \cdot \partial_x u\cdot x^{\alpha-\delta} \,dx.
\end{align*}

By using the Parseval identity for Mellin transforms, we have
\begin{align}
\int_0^{\infty} \Lambda^{-\alpha} u \partial_x u x^{\alpha-\delta} \,dx
& = \int_0^\infty \frac{\Lambda^{-\alpha} u }{x^{\frac {\delta}2}} \cdot
\frac{\partial_x u}{x^{\frac {\delta}2 - \alpha-1}} \frac {dx} x \notag \\
& = \frac 1 {2\pi} \int_{-\infty}^{\infty} \overline{A(\lambda)}
B(\lambda) \,d\lambda, \label{227_e1}
\end{align}
where
\begin{align}
A(\lambda)& = \int_0^{\infty} \Lambda^{-\alpha} u \cdot x^{i\lambda - \frac {\delta}2 -1} \,dx, \label{227_e2} \\
B(\lambda) & = \int_0^{\infty} \partial_x u \cdot x^{i\lambda - \frac {\delta}2 +\alpha} \,dx \notag \\
& = \int_0^\infty \partial_x \Lambda^{\alpha} ( \Lambda^{-\alpha} u ) \cdot x ^{i\lambda - \frac {\delta}2 +\alpha}
\,dx. \notag
\end{align}

By Lemma \ref{lem3} and observing that $\partial_x u $ is an even function on $\mathbb R$, we obtain
(note here $\theta=\frac{\delta}2 -\alpha$ and $0<\theta<1-\alpha$),
\begin{align}
B(\lambda) & = {2^{\alpha+1}} F_{\alpha,\theta}(\lambda)
\int_0^\infty
(\Lambda^{-\alpha} u)\;\cdot x^{-(\frac {\delta}2 +1 -i \lambda)} \,dx \notag \\
& = {2^{\alpha+1}} F_{\alpha,\theta}(\lambda) A(\lambda),
\label{227_e3}
\end{align}
where 
$F_{\alpha,\theta}(\lambda)$
was defined in \eqref{e402}. Substituting \eqref{227_e3} into \eqref{227_e1} and using \eqref{e404}, we have
\begin{align*}
\int_0^{\infty} \Lambda^{-\alpha} u \partial_x u x^{\alpha-\delta} \,dx
& = C_{\alpha,\theta} \int_{-\infty}^{\infty}
\re( F_{\alpha,\theta}(\lambda)) |A(\lambda)|^2 \,d\lambda \\
& \ge C_{\alpha,\theta} \int_{-\infty}^{\infty} |A(\lambda)|^2 \,d\lambda \\
& \ge C_{\alpha,\theta} \int_0^{\infty} \frac { (\Lambda^{-\alpha}u)^2} {x^{1+\delta}} \,dx,
\end{align*}
where the last step follows from \eqref{227_e2} and the Parseval identity for Mellin transforms. This completes the proof of \eqref{e25_40a}.
\end{proof}

\begin{lem}[$L_x^1$ norm is nonincreasing] \label{lem100}
Assume $0<\alpha<1$ and $0\le \beta \le 2$ in \eqref{e_main}.
Let the initial data $u_0 \in L_x^1(\mathbb R) \cap H_x^1(\mathbb R)$ be such
that $u_0(x)\ge 0$ for any $x\ge 0$ and $u_0$ is odd in $x$. Let $u=u(t,x)$ be the corresponding solution
to \eqref{e_main} with lifespan $[0,T_0)$ where $0<T_0\le +\infty$. Then for any $t\in [0,T_0)$,
we have $u(t,x)\ge 0$ for any $x\ge 0$, and
\begin{align*}
\|u(t,\cdot)\|_{L_x^1(\mathbb R)} \le \|u_0\|_{L_x^1(\mathbb R)}.
\end{align*}
\end{lem}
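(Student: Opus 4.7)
The plan is to establish three facts in sequence: (a) persistence of oddness, (b) non-negativity on $[0,\infty)$, and (c) monotonicity of $\|u(t)\|_{L^1}$. For (a), both $\Lambda^{-\alpha}$ and $\Lambda^{\beta}$ have even Fourier symbols and hence preserve odd functions. If $u(t,\cdot)$ is odd then $\Lambda^{-\alpha}u\,\partial_x u$ (odd times even) is odd and $\Lambda^{\beta}u$ is odd, so $\partial_t u$ is odd; the odd reflection of $u$ therefore solves the same initial value problem, and uniqueness from Theorem \ref{thm_M1} forces $u(t,\cdot)$ to remain odd. In particular $u(t,0)=\Lambda^{-\alpha}u(t,0)=0$.

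For (b), I would run a non-local maximum principle on the half line. At a hypothetical first zero-touching point $(t_0,x_0)$ with $x_0>0$ one has $u(t_0,x_0)=\partial_x u(t_0,x_0)=0$ and $\partial_t u(t_0,x_0)\le 0$, so \eqref{e_main} gives $\partial_t u(t_0,x_0)=-\nu\Lambda^{\beta}u(t_0,x_0)$. Splitting the singular integral representation of $\Lambda^{\beta}$ into $y>0$ and $y<0$ and using oddness together with $u(t_0,\cdot)\ge 0$ on $[0,\infty)$ yields, for $0<\beta<1$ (the cases $\beta=1$, $1<\beta<2$, and $\beta=2$ are analogous),
\[
\Lambda^{\beta}u(t_0,x_0)=-C_{\beta}\int_0^\infty u(t_0,y)\Bigl[\frac{1}{|x_0-y|^{1+\beta}}-\frac{1}{(x_0+y)^{1+\beta}}\Bigr]dy\le 0,
\]
since $|x_0-y|\le x_0+y$ when $x_0,y>0$. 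Hence $\partial_t u(t_0,x_0)\ge 0$, ruling out a downward crossing. In the inviscid case $\nu=0$ this is even simpler: the flow $\dot X=-\Lambda^{-\alpha}u(t,X)$ fixes the origin (because $\Lambda^{-\alpha}u(t,0)=0$), so by ODE uniqueness no characteristic crosses zero, and $u$ is transported along characteristics.

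For (c), oddness and non-negativity give $\|u(t)\|_{L^1}=2\int_0^\infty u\,dx$, so it suffices to show $\frac{d}{dt}\int_0^\infty u\,dx\le 0$. Integration by parts (the boundary contributions vanish because $u(t,0)=\Lambda^{-\alpha}u(t,0)=0$ and both decay at infinity) recasts the drift as $-\int_0^\infty u\,\partial_x\Lambda^{-\alpha}u\,dx$. Using the representation \eqref{ef_odd} from Lemma \ref{lem1},
\[
\Lambda^{-\alpha}u(x)=C_{\alpha}\int_0^\infty u(y)\bigl[|x-y|^{\alpha-1}-(x+y)^{\alpha-1}\bigr]dy,
\]
I differentiate in $x$, multiply by $u(x)$, and integrate in $x\in(0,\infty)$. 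The resulting $\sgn(x-y)|x-y|^{\alpha-2}$ piece cancels by $x\leftrightarrow y$ antisymmetry in the principal-value sense, leaving
\[
\int_0^\infty\Lambda^{-\alpha}u\,\partial_x u\,dx=-(1-\alpha)C_{\alpha}\iint_{[0,\infty)^2}\frac{u(x)u(y)}{(x+y)^{2-\alpha}}\,dx\,dy\le 0.
\]
A parallel calculation with the integral representation of $\Lambda^{\beta}$ on odd functions gives $\int_0^\infty\Lambda^{\beta}u\,dx=\tfrac{2C_{\beta}}{\beta}\int_0^\infty u(x)x^{-\beta}\,dx\ge 0$ for $0<\beta<2$ (with the convention that $\beta=2$ is handled by $\int_0^\infty(-\partial_{xx}u)\,dx=\partial_x u(t,0)\ge 0$, since $x=0$ is a boundary minimum of $u(t,\cdot)|_{[0,\infty)}$). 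Combining these gives $\frac{d}{dt}\int_0^\infty u\,dx\le 0$, which is the desired $L^1$ bound.

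The main obstacle is the drift bound in (c): naively $\int_0^\infty u\,\partial_x\Lambda^{-\alpha}u\,dx$ has no obvious sign, since $\partial_x\Lambda^{-\alpha}u$ is even and sign-changing even when $u\ge 0$ on $[0,\infty)$. The decisive observation is the Riesz-kernel computation above, where the singular $\sgn(x-y)|x-y|^{\alpha-2}$ contribution cancels by antisymmetry in principal-value sense and only the manifestly non-negative $(x+y)^{-(2-\alpha)}$ quadratic form survives; this is the half-line analogue of the absorption mechanism in \cite{CCF05}.
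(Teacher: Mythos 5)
Your proposal is correct in substance, and it takes a genuinely different route from the paper. The paper proves the lemma by time-splitting (Trotter) approximation: it interlaces the inviscid flow $\partial_t f=\Lambda^{-\alpha}f\,\partial_x f$ with the linear semigroup $\partial_t g=-\nu\Lambda^\beta g$, and handles the linear step through the representation $g(t,x)=\int_0^\infty\bigl(k(t,x-y)-k(t,x+y)\bigr)g_0(y)\,dy$, where the fundamental solution $k$ is nonnegative and radially decreasing; this single structural fact yields both sign preservation on the half-line (since $k(t,x-y)\ge k(t,x+y)$ for $x,y>0$) and the half-line $L^1$ contraction (since $\int_{-y}^y k(t,z)\,dz\le 1$). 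The nonlinear step is then treated by exactly the kernel computation in your step (c): after symmetrization the $\sgn(x-y)\,|x-y|^{\alpha-2}$ piece cancels in the principal-value sense and only the sign-definite $(x+y)^{-(2-\alpha)}$ quadratic form survives, so your core drift mechanism coincides with the paper's. What differs is everything around it: you avoid the splitting and the fractional heat kernel altogether, treating $\nu\ge0$ and $0\le\beta\le2$ in one differential inequality, with the dissipative contribution controlled by the identity $\int_0^\infty \Lambda^\beta u\,dx=c_\beta\int_0^\infty u(x)\,x^{-\beta}\,dx\ge 0$ for $0<\beta<2$. That identity is correct (it also follows by duality, pairing with $\sgn x$ and using $\Lambda^\beta(\sgn x)=c_\beta|x|^{-\beta}\sgn x$ in the spirit of Lemma \ref{lem1}), and your boundary-derivative argument at $\beta=2$ is fine. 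Your route is arguably more self-contained; the paper's route makes positivity of the viscous step immediate from the kernel, at the cost of a limiting argument to recombine the two flows. Incidentally, your exponents $-(1-\alpha)C_\alpha\iint u(x)u(y)(x+y)^{-(2-\alpha)}\,dx\,dy$ are the correct ones; the corresponding display in the paper's proof carries a typo ($(1+\alpha)$ and $|x+y|^{-(2+\alpha)}$), which does not affect the sign conclusion.

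One caveat on step (b): as written, the ``first touching point'' argument is only heuristic, since the non-strict conclusion $\partial_t u(t_0,x_0)\ge 0$ does not by itself produce a contradiction (and touching per se cannot be excluded: $u(t,0)\equiv 0$ by oddness). To make it rigorous, track $m(t)=\min_{x\ge 0}u(t,x)$ via the standard Rademacher-type lemma: whenever $m(t)<0$ the minimum is attained at some interior $x_0>0$ where $\partial_x u=0$ (so the drift and, for $\beta\in(1,2)$, the gradient correction in the kernel representation drop out), and the odd-kernel splitting gives the strict bound $\Lambda^\beta u(t,x_0)\le C_\beta\int_0^\infty 2m(t)\,(x_0+y)^{-1-\beta}\,dy<0$, because $u(x_0)-u(y)\le 0$ and $u(x_0)+u(y)\ge 2m(t)$ together with $|x_0-y|\le x_0+y$; hence $m'(t)\ge 0$ a.e.\ on $\{m<0\}$ and $m$ never goes negative. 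Your characteristics argument covers $\nu=0$ and, trivially, $\beta=0$. With this repair the proof is complete, and its level of rigor matches the paper's, which is itself informal at the corresponding points (the $\epsilon$-regularization and the sign preservation for the inviscid step are likewise only sketched there).
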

\begin{proof}[Proof of Lemma \ref{lem100}]
We shall use the idea of time-splitting approximation. Namely the solution
$u=u(t,x)$ on any $[0,T^\prime]$ with $T^\prime<T_0$ can be approximated by interlacing the nonlinear
evolution $\partial_t f = \Lambda^{-\alpha} f \partial_x f$ with the linear evolution
$\partial_t g= -\nu \Lambda^{\beta} g$ with small time step $h=T^\prime/N$ where $N\to \infty$. Consider
first the linear evolution $\partial_t g=-\nu \Lambda^{\beta} g$ on a time interval $[0,h]$ with
$g_0$ odd in $x$ and $g_0(x)\ge 0$ for any $x\ge 0$. Then clearly $g=g(t,x)$ is also odd in $x$ and
has the representation
\begin{align*}
g(t,x) = \int_0^{\infty} \Bigl( k(t,x-y) - k(t,x+y) \Bigr) g_0(y) \,dy,
\end{align*}
where $k(t,z)$ is the fundamental solution corresponding to the
propagator $\partial_t +\nu \Lambda^{\beta}$, {which is nonnegative
and radially decreasing}. It is easy to check that $g(t,x)\ge 0$ for
any $x\ge 0$. Furthermore
\begin{align*}
\int_0^\infty g(t,x) \,dx & = \int_0^\infty \Bigl( \int_{-y}^y k(t,z) dz \Bigr) g_0(y) \,dy \notag\\
& \le \int_0^\infty g_0(y)\,dy,
\end{align*}
where we used the fact that $k(t,z)$ is nonnegative and $\int_{-\infty}^{\infty} k(t,z) dz =1$.
We now check the nonlinear evolution $\partial_t f = \Lambda^{-\alpha} f \partial_x f$ on the
time interval $[0,h]$. Assume $f_0$ is odd in $x$ and $f_0(x)\ge 0$ for any $x\ge 0$. It is not difficult
to check that $f(t,x)$ is odd in $x$ and $f(t,x) \ge 0$ for any $x\ge 0$. Then by using \eqref{ef_odd},
integrating by parts, we compute (here for simplicity of presentation
we omit any $\epsilon$-regularization argument needed for the
convergence of integrals):
\begin{align*}
&\;\;\frac d {dt} \int_0^{\infty} f(t,x) \,dx \notag \\
& = \int_0^{\infty} \Lambda^{-\alpha} f \partial_x f \,dx \notag \\
& = \int_0^{\infty} \int_0^{\infty} (\partial_x f)(t,x) f(t,y) \cdot
\Bigl( |x-y|^{-(1-\alpha)} - |x+y|^{-(1+\alpha)}\Bigr) \,dx \,dy \notag \\
& = - \int_0^{\infty} \int_0^{\infty} f(t,x) f(t,y)
\Bigl( - (1-\alpha) |x-y|^{-(2-\alpha)} \sgn(x-y) \notag \\
& \qquad\qquad+ (1+\alpha)\cdot |x+y|^{-(2+\alpha)} \Bigr) \,dx \,dy \notag \\
& = - \int_0^{\infty} \int_0^{\infty} f(t,x) f(t,y) \cdot (1+\alpha) \cdot |x+y|^{-(2+\alpha)} \,dx \,dy,
\end{align*}
where in the last equality we have used a symmetrization in $x$ and $y$ to make the first integral corresponding
to the kernel $|x-y|^{-(2-\alpha)} \sgn(x-y)$ vanish. Hence we obtain $\|f(t,\cdot)\|_{L_x^1(\mathbb R^+)}
\le \| f_0 \|_{L_x^1(\mathbb R^+)}$.  This completes the proof for the nonlinear evolution part.
\end{proof}

We are now ready to complete the
\begin{proof}[Proof of Theorem \ref{thm2}]
We will argue by contradiction. Assume the solution corresponding to $u_0$ exists for all time, then by
Lemma \ref{lem1}, we have
\begin{align*}
\int_0^{\infty} \frac{(\Lambda^{-\alpha} u)(t,x)} {x^{\delta}} \,dx &= C_{\alpha,\delta} \int_0^{\infty}
\frac{u(t,x)} {x^{\delta-\alpha}} \,dx \notag \\
& \lesssim \|u(t,\cdot)\|_{L_x^1} + \int_0^1 \frac{ \|\partial_x u\|_{\infty}} {x^{\delta-\alpha-1}} \,dx \notag\\
& \lesssim \| u(t,\cdot) \|_{L_x^1} + \| \partial_x u \|_{L_x^\infty} <+\infty.
\end{align*}

This shows that the integral $\int_0^\infty (\Lambda^{-\alpha}u )(t,x)  x^{-\delta} \,dx$ is finite for any $t\ge0$.
Next we show that the integral blows up at some finite $T>0$ and obtain contradiction.

By using \eqref{e_main}, Proposition \ref{prop1} and Lemma \ref{lem1}, we have
\begin{align}
\frac d {dt} \int_0^{\infty} \frac{(\Lambda^{-\alpha} u)(t,x)} {x^{\delta}} \,dx &=
\int_0^{\infty} \frac{\Lambda^{-\alpha} ( \Lambda^{-\alpha} u \partial_x u ) } {x^{\delta}} \,dx
- \nu \int_0^{\infty} \frac{\Lambda^{\beta} \Lambda^{-\alpha} u } {x^{\delta}} \,dx \notag \\
& \ge C_{\alpha,\delta} \int_0^{\infty} \frac{(\Lambda^{-\alpha} u )^2} {x^{1+\delta}} \,dx
- \nu C_{\beta,\delta} \int_0^{\infty} \frac{\Lambda^{-\alpha} u }{x^{\delta+\beta}} \,dx. \label{e228_e3}
\end{align}

By Cauchy--Schwartz, we obtain
\begin{align*}
&\int_0^{\infty} \frac{\Lambda^{-\alpha} u }{x^{\delta+\beta}} \,dx \notag \\
 =&
\int_0^1 \frac{\Lambda^{-\alpha} u }{x^{\delta+\beta}} \,dx + \int_1^\infty
\frac{\Lambda^{-\alpha} u }{ x^{\delta+\beta}} \,dx \notag \\
\le & \Bigl( \int_0^1 \frac{(\Lambda^{-\alpha} u)^2} {x^{1+\delta}} \,dx \Bigr)^{\frac 12}
\Bigl( \int_0^1 \frac 1 {x^{\delta+2\beta-1}} \,dx \Bigr)^{\frac 12}
\notag \\
& \qquad + \| u(t) \|_{L_x^1} \cdot \| \Lambda^{-\alpha}
( |x|^{-\delta-\beta} \sgn(x) \chi_{|x|\ge 1} ) \|_{L_x^\infty}.
\end{align*}

Observe that $\delta+2\beta-1<1$ and $\delta+\beta>\alpha$. By Lemma \ref{lem1a} and Lemma \ref{lem100}, we get
\begin{align*}
\int_0^{\infty} \frac{\Lambda^{-\alpha} u } {x^{\delta+\beta}} \,dx
\le \frac {C_{\alpha,\delta}} {2(1+\nu)} \int_0^1 \frac
{(\Lambda^{-\alpha} u )^2} {x^{1+\delta}} \,dx +
C_{\alpha,\delta,\beta} ({1+}\| u_0 \|_{L_x^1}).
\end{align*}

Substituting this last estimate into \eqref{e228_e3} and using again Lemma \ref{lem1a} and Cauchy--Schwartz, we obtain
\begin{align*}
\frac d {dt} \int_0^{\infty} \frac{ (\Lambda^{-\alpha} u )(t,x)} {x^{\delta}} \,dx
& \ge \frac {C_{\alpha,\delta}}2 \int_0^1 \frac{(\Lambda^{-\alpha} u )^2} {x^{1+\delta}} \,dx
-C_{\alpha,\delta,\beta,\nu} ( 1+ \|u_0\|_{L_x^1} ) \notag \\
& \ge  C_{\alpha,\delta}^{\prime} \Bigl( \int_0^1 \frac{(\Lambda^{-\alpha} u)(t,x)} {x^\delta} \,dx \Bigr)^2
-C_{\alpha,\delta,\beta,\nu} ( 1+ \|u_0\|_{L_x^1} ) \notag \\
& \ge C_{\alpha,\delta}^{\prime\prime}  \Bigl( \int_0^{\infty}
 \frac{(\Lambda^{-\alpha} u)(t,x)} {x^\delta} \,dx \Bigr)^2
 - C_{\alpha,\delta,\beta,\nu}^{\prime} (1+ \|u_0\|_{L_x^1})^2.
\end{align*}
It is now clear that if $u_0$ satisfy \eqref{e228_e5} with sufficiently large constant $C_{\alpha,\beta,\delta,\nu}$,
then we obtain the inequality of the form
\begin{align*}
\frac d {dt} a(t) \ge C \cdot a(t)^2,
\end{align*}
where
$$
a(t) = \int_0^\infty \frac{(\Lambda^{-\alpha} u )(t,x)} {x^{\delta}} \,dx.
$$
Clearly $a(t)$ goes to infinity
in finite time. We have obtained the desired contradiction and the proof is now completed.

\end{proof}

\end{document}